\def\doi{9(1:02)2013}
\newcommand{\comment}[1]{   }
\newcommand{\subterm}[4]{%
  \fill[draw=gray,fill=gray!20!white,very thick] #1 +(0,#3) --%
  +(#2,-#3) -- +(-#2,-#3) -- cycle; \draw #1+(0,-0.2) node
  [circle] {#4}; }
\newcommand{\subtermspec}[5]{%
  \fill[draw=gray,fill=gray!20!white,very thick] #1 +(0,#3) --%
  +(#2,-#3) -- +(-#2,-#3) -- cycle; \draw #1 +#5 node
  [circle] {#4}; }
\newcommand\lmbda[4]{%
  \fill[black] #1 circle (3pt);
  \draw #1 node [circle, anchor=#4] {#3} -- #2;
}
\newcommand\app[3]{%
  \fill[black] #1 circle (3pt);
  \draw #1 -- #2;
  \draw #1 -- #3;
}
\newcommand\vari[3]{%
  \draw #1 -- #2;
  \draw #2+(0,.2) node [circle, anchor=north] {#3};
}
\theoremstyle{plain}
\newtheorem{theorem}{Theorem}
\newtheorem{main_theorem}[thm]{Main Theorem}
\newtheorem{observation}[theorem]{Observation}
\theoremstyle{definition}
\newcommand{\set}[1]{\left\{#1\right\}}
\newcommand{\liminfty}[1]{\lim_{n \rightarrow \infty} #1}
\DeclareMathOperator{\size}{size}
\newcommand{\Nat}{\mathbb{N}}
\newcommand\correc[1]{#1}
\newcommand\correct[1]{#1}
\newcommand\corr[1]{#1}
\def\SN{{\ifmmode{SN}\else{$SN$}\fi}}
\begin{document}

\title{
Asymptotically almost all $\lambda$-terms are strongly normalizing
}

\author[R.~David]{Ren\'{e} David\rsuper a}
\author[K.~Grygiel]{Katarzyna Grygiel\rsuper b}
\author[J.~Kozik]{Jakub Kozik\rsuper c}
\author[C.~Raffalli]{Christophe Raffalli\rsuper d}
\author[G.~Theyssier]{Guillaume Theyssier\rsuper e}
\author[M.~Zaionc]{Marek Zaionc\rsuper f}
\address{{\lsuper{a,d,e}}LAMA, CNRS,
 Universit\'{e} de Savoie,
 73376 Le Bourget-du-Lac, France}
\email{\{rene.david, christophe.raffalli, guillaume.theyssier\}@univ-savoie.fr}

\address{{\lsuper{b,c,f}}Theoretical Computer Science, Jagiellonian University,
 {\L}ojasiewicza 6, Krak\'{o}w, Poland}
\email{\{Katarzyna.Grygiel, Jakub.Kozik, zaionc\}@tcs.uj.edu.pl}
\thanks{This work was supported
 by  the research project funded by the French Rh\^one-Alpes region and initiated by Pierre Lescanne and by grant number N206 376137 funded by Polish Ministry of Science and Higher Education
}

\begin{abstract}
  We present \correc{a} quantitative analysis of various (syntactic and behavioral) properties of random   $\lambda$-terms. Our main results show that asymptotically\correct{,} almost all terms
  are strongly normalizing and that any fixed closed term almost never appears in a random term.
  Surprisingly, in combinatory logic (the
  translation of the $\lambda$-calculus into combinators), the result
  is exactly opposite. We show that almost all terms are {\em not} strongly
  normalizing. This is due to the fact that any fixed combinator almost always appears in a random combinator.
\end{abstract}

\keywords{lambda-calculus, combinatorics, normalisation, combinatory logic}
\ACMCCS{[{\bf Mathematics of Computating}]: Discrete
  Mathematics---Combinatorics---Combi\-natoric problems}
\subjclass{G.2.1}

\maketitle

\section{Introduction}
Since the pioneering work of Church, Turing \textit{et al.}, more
than 70 years ago, a wide range of computational models has been
introduced. 
\correct{It has been shown}
 that \correc{the feasible models} are all equivalent in \correc{the} sense of
computational power.
However, this equivalence says nothing about
what  \emph{typical} programs or machines of each of these
models do.

This paper addresses the following question. Having a
theoretical programming language and a property, what is the
probability that a random program satisfies the given property? In
particular, is it true that almost every random program
satisfies the desired property?

We concentrate on functional programming languages and, more
  specifically, on the $\lambda$-calculus, the simplest language \correct{of this kind}
(see \cite{hamkins, rybalov, theyssier} for similar work
on other models of computation). \corr{To our knowledge,} the only work 
on this subject is some experiments \correc{carried out} by Jue Wang (see
\cite{wang}). Most interesting properties of $\lambda$-terms are those
concerning their behavior. However, to analyze them, one has to
consider some syntactic properties as well.


As far as we know, no asymptotic value for the number of
$\lambda$-terms of size $n$ is known.  We give upper and lower bounds
for this super-exponential number (see Section
\ref{calculus}). Although the gap between the lower and the upper
bound is big (exponential), these estimations are sufficient for our
purpose.

We prove several 
results \corr{on} the structural form of a \corr{random} $\lambda$-term. In particular,
we show that almost every closed $\lambda$-term begins with
``many'' lambdas (the precise meaning  is given in Theorem
\ref{th:starting_lambdas}). Moreover, each of them binds  ``many''
occurrences of variables (Theorems \ref{binding1}, \ref{binding2}
and \ref{binding3}). Finally, given any fixed closed
$\lambda$-term, almost no $\lambda$-term has this term as a
subterm (Theorem \ref{avoidaux}).

We also give \corr{results on} the behavior of terms, \correct{which is} our original
motivation. We show that a random term is strongly normalizing
(\SN{} for short) with asymptotic probability  $1$.
\corr{Let us recall} that, in general, \corr{knowing whether a term is} \SN{} is an
undecidable question.

Combinatory logic is another programming language related to the
$\lambda$-calculus. It can be seen as an encoding of
$\lambda$-calculus into a language
without variable binding.
Moreover, there are translations, in both
directions, which preserve the property of being
\SN{}. Surprisingly, our results concerning random combinators are
very different from those for the $\lambda$-calculus.
For example, we show that for every fixed term $t_0$, almost every
term has $t_0$ as \correc{a} subterm. \corr{This} implies that almost
every term is not \SN{}.
The difference of results concerning strong normalization between
$\lambda$-calculus and combinatory logic \corr{is not contradictory
  since the coding of bound variables in combinatory logic induces a
large increase of size}. This is discussed in Section \ref{size}.

\bigskip

Our interest in statistical properties of computational objects, like $\lambda$-terms or combinators, is a natural extension \correc{of similar work} on logical objects like formulas or proofs.
This paper is a continuation of the research in which
we try to estimate the properties of random formulas in various logics (especially  the probability of truth, or satisfiability, \corr{of} random formulas).
For the purely implicational logic with one variable (and 
simple type systems), the exact value of \corr{the} density of true formulas \correc{has} been  computed in 
\cite{mtz00,zaionc05}.
Quantitative relationship between intuitionistic and classical logics (based on the same language) has also been analyzed. The exact value describing how \correc{large the intuitionistic} fragment of the classical logic with one variable \correct{is} has been determined in 
\cite{kos-zaionc03}. For results with more \correc{than} one variable, \correc{or with} other logical connectives, consult \cite{FGGZ,GKZ,GK-09}.

 \comment{The case of and/or connectors received much attention -- see Lefmann and Savick\'{y} \cite{LS97},
Chauvin, Flajolet, Gardy and Gittenberger \cite{CFGG04}
and Gardy and Woods \cite{GW05}. We refer to Gardy \cite{gardy-dmtcs} for a survey on
probability distribution on Boolean functions induced by random Boolean
expressions.
}

\bigskip

\correct{The organization of the paper is as follows.
In Section \ref{lambda} we recall basic definitions and facts
about $\lambda$-calculus and combinatory logic.
Section \ref{maths}~gives
combinatorial notations which we will need in our proofs.
It introduces generating functions
and basic techniques to compute asymptotics.
The notion of density and its basic properties is introduced in Section \ref{densities}.
The lower and
upper bounds for the number of $\lambda$-terms of size $n$ are given in
Section \ref{calculus}.
In Section \ref{coding}~we
prove theorems about random $\lambda$-terms using coding which is
an injective and size-preserving function on terms.
Our main result establishing that \corr{the}
set of strongly normalizable terms has density $1$
appears at the end of this section in Theorem \ref{main}.
Section \ref{cl} contains results in combinatory
logic, namely the fact that every fixed term appears in almost every term.
The main result of this section, \corr{in Theorem \ref{main_CL}, states} that
the density of non-strongly normalizing combinators is $1$.
Finally Section \ref{size} discusses future work, open questions and possible applications of results.
} 

\section{$\lambda$-calculus and combinatory logic}\label{lambda}

\subsection{$\lambda$-calculus}

We start with presenting some fundamental concepts of the
$\lambda$-calculus, as well as with some new definitions used in this
paper.  \correc{We do this mainly to make our notations and
  conventions precise. It should be enough for defining the notion of
  size, but for substitution and reduction and normalization we
  recommend \cite{BAR84}.}

\begin{defi}
Let $V$ be a countable set of variables. The set $\overline{\Lambda}$ of $\lambda$-terms is defined by the following grammar:
$$t := \ V \ \mid \ \lambda V. t \ \mid \ (t \ t)$$

We denote by $\Lambda$ the set of all closed $\lambda$-terms. We write
$t_1 \ t_2 \dots t_n$ without \correc{parentheses} for $(\dots(t_1 \ t_2) \dots t_n)$.
\end{defi}

As usual, $\lambda$-terms are considered modulo 
$\alpha$-equivalence, i.e.\ two terms which differ only by the names of bound variables are considered equal.

Let us observe that $\lambda$-terms can be seen as rooted unary-binary trees.

\begin{defi}
By \correc{a} $\lambda$-tree we mean \correc{a} rooted tree \correc{of the following form} there are two kinds of \correc{inner} nodes -- labeled with $@$ and with $\lambda$. Nodes labeled with  $@$ have two successors\correc{:} left and right. Nodes labeled with $\lambda$ have only one successor. Each Leaf of a tree  is labeled \correc{either} \correct{with} \correc{a} variable or \correct{with} \correc{a pointer to one of the} $\lambda$ nodes above it.
\end{defi}

\correct{
\corr{For} every $\lambda$-term $t$ we define the $\lambda$-tree $G(t)$ in the following way:
\begin{enumerate}[$\bullet$]
  \item If $t$ is a variable $x$, then $G(t)$ is a single node labeled with $x$.
  \item If $t= t_1 t_2$, then $G( \correc{t_1} \correc{t_2}) $ is a tree with the root labeled with $@$ and two subtrees $G(\correc{t_1})$ (left) and $G(\correc{t_2})$ (right).
  \item If $t= \lambda x.u$, then  $G(t)$ is obtained from $G(u)$ in four steps:
   \begin{enumerate}[$-$]
     \item add a new root labeled with $\lambda$;
     \item connect the new root with $G(\correc{u})$;
     \item connect all leaves of $G(\correc{u})$ labeled with $x$ with the new root;
     \item remove all labels $x$.
   \end{enumerate}

\end{enumerate}
}

\bigskip

\bigskip

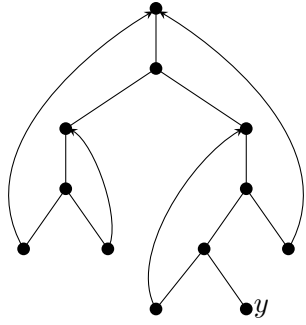
\begin{figure}[ht]
  \centering
  \begin{tikzpicture}[>=stealth,scale=.8]
    \lmbda{(0,5)}{(0,4)}{}{east}
    \app{(0,4)}{(-1.5,3)}{(1.5,3)}
    \lmbda{(-1.5,3)}{(-1.5,2)}{}{east}
    \app{(-1.5,2)}{(-2.2,1)}{(-0.8,1)}
    \lmbda{(1.5,3)}{(1.5,2)}{}{east}
    \app{(1.5,2)}{(0.8,1)}{(2.2,1)}
    \app{(0.8,1)}{(0,0)}{(1.5,0)}
    \fill[black] (-2.2,1) circle (3pt)
                 (-0.8,1) circle (3pt)
                 (0,0) circle (3pt)
                 (1.5,0) circle (3pt)
                 (2.2,1) circle (3pt);
   \draw [->] (-2.2,1) .. controls +(120:2cm) and +(-150:1cm) .. (-0.05,4.95);
   \draw [->] (-0.8,1) .. controls +(60:0.5cm) and +(-30:0.5cm) .. (-1.45,3);
   \draw [->] (0,0) .. controls +(120:1cm) and +(-150:1cm) .. (1.45,3);
   \draw [->] (2.2,1) .. controls +(60:2cm) and +(-30:1cm) .. (0.05,4.95);
   \draw (1.75,0) node {$y$};
  \end{tikzpicture}
\caption{\rm The $\lambda$-tree representing the term $\lambda z.(\lambda u. zu)\correc{(\lambda u. uyz)}$ (labels of inner nodes are not shown in the figure and can be recovered from their degrees)}
\label{fig:headlambdaterm_a}
\end{figure}


\begin{observation}
 If $T$ is a $\lambda$-tree then $T=G(t)$ for some $\lambda$-term $t$.
 Terms  $t$ and $u$ are $\alpha$-convertible iff  $G(t)$ \correc{and} $G(u)$ \correc{are the same tree}.
 \end{observation}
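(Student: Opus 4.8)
The plan is to establish the two assertions separately, in both cases by induction on the structure of the objects involved, using the single observation that the tree $G(t)$ carries exactly the same information as the ordinary parse tree of $t$ except that it \emph{forgets the names of bound variables}, each bound occurrence being recorded instead by an edge to the binding $\lambda$-node.

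For the first assertion (every lambda tree equals $G(M)$ for some $M$) I would proceed by structural induction on a lambda tree $T$. Since an immediate subtree of $T$ need not itself be a lambda tree --- some of its leaves may be connected to $\lambda$-nodes lying above it --- I first strengthen the statement: for a tree $T$ whose connected leaves point either to $\lambda$-nodes inside $T$ or to $\lambda$-nodes drawn from a fixed finite external list, there is a term $M$ with $G(M) = T$ whose free variables are the variable-labels of $T$ together with names chosen for the external binders. The variable-leaf and external-pointer cases are immediate. If $T$ has an $@$-root with immediate subtrees $T_1$ and $T_2$, apply the induction hypothesis (with the same external list) to each and form the application $M = M_1 M_2$. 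If $T$ has a $\lambda$-root, pick a fresh variable $x$, replace every leaf connected to the root by a leaf labelled $x$ so as to obtain a strictly smaller tree $T'$, apply the induction hypothesis to get $P$ with $G(P) = T'$, and set $M = \lambda x.P$; the four steps defining $G(\lambda x.P)$ are precisely the inverse of the transformation $T \mapsto T'$, so $G(M) = T$.

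For the second assertion, the implication ``$M$ and $N$ $\alpha$-convertible $\Rightarrow$ $G(M) = G(N)$'' reduces, by induction and the fact that $\alpha$-equivalence is generated by renaming bound variables under arbitrary contexts, to checking $G(\lambda x.P) = G(\lambda y.\,P')$ whenever $P'$ is obtained from $P$ by renaming $x$ to a variable $y$ not free in $P$; but both sides add a $\lambda$-root, connect to it exactly the leaves coming from occurrences of the renamed variable, and delete its label, so the chosen name is invisible to $G$. Conversely, suppose $G(M) = G(N)$. The label of the root of this common tree determines the outermost constructor of both $M$ and $N$, so they begin with the same symbol. If both are the same variable we are done; if both are applications, their immediate subtrees have equal $G$-images and the induction hypothesis applies; if $M = \lambda x.P$ and $N = \lambda y.Q$, pick a fresh variable $z$, note that relabelling by $z$ the leaves of $G(M) = G(N)$ that are connected to the root exhibits the equality $G(P[x:=z]) = G(Q[y:=z])$, conclude that $P[x:=z]$ and $Q[y:=z]$ are $\alpha$-convertible by induction, and hence so are $\lambda x.P$ and $\lambda y.Q$.

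The only delicate point is the $\lambda$-case of the first assertion: one has to phrase the induction hypothesis so that ``dangling'' binder edges in a subtree are handled exactly like free variables, and then verify that the explicit four-step recipe for $G(\lambda x.P)$ really does undo the operation of cutting off the $\lambda$-root and turning its bound leaves into a fresh variable. Everything else is the routine check that term syntax and lambda-tree syntax agree up to the naming of bound variables.
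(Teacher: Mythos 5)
The paper states this observation without proof, treating it as a routine fact, and your argument is a correct and complete version of the standard verification it leaves implicit: the strengthened induction hypothesis allowing leaves bound by external $\lambda$-nodes is exactly the right device for the surjectivity claim, and both directions of the $\alpha$-equivalence claim are handled soundly. No gaps.
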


\comment{
For every term $t$, if we forget about variable binding we obtain a unary-binary tree. We call it \emph{the structure} of $t$. Removing from the structure the unary nodes and connecting binary ones and leaves so that to preserve the original connectivity, we obtain \emph{binary structure} of $t$.
}

We often use (without giving the precise definition) the classical terminology about trees (e.g. path, root, leaf, etc.). A path from the root to a leaf is called a branch.

\begin{defi} Let $t$ be a $\lambda$-term.
\begin{enumerate}[(1)]
\item  A term $t'$ is a \emph{subterm} of $t$ (denoted as $t' \leq t$) if
\begin{enumerate}[$-$]
    \item either $t=t'$,
    \item or $t=\lambda x.u$ and $t' \leq u$,
    \item or $t=(u \ v)$ and $(t'\leq u$ or $t'\leq v)$.
\end{enumerate}
\item Let $u=\lambda x.a$ be a subterm of $t$. We say that this occurrence of $\lambda x$ is \emph{binding} in $t$ if $x$ has a free occurrence in $a$.
\item The \emph{unary height} of $t$ is the maximum number of lambdas on a \correc{branch} 
  in \correc{the} $\lambda$-tree of $t$.
\item Two lambdas in $t$ are called \emph{incomparable} if there is no branch in the $\lambda$-tree containing both of them. The $\lambda$-\emph{width}  of $t$ \correc{(or simply width of $t$ when there is no ambiguity)} is the maximum number of pairwise incomparable binding lambdas. \textit{Remark: }a closed $\lambda$-term has width at least $1$.
\item We say that $t$ has $k$ \emph{head lambdas} if its $\lambda$-tree  starts with at least $k$ unary nodes.
\end{enumerate}
\end{defi}

\begin{defi}\ 
\begin{enumerate}[$\bullet$]
  \item \correc{When $t$ and $u$ are terms, $t[x:=u]$ denotes the {\em capture
    avoiding substitution} of $u$ for the free occurrences of the
  variable $x$ in $t$. Bound variables of $t$ may have to be renamed to avoid
  capture of free variables in $u$.}
  \item A term of the form $(\lambda x.t)u$ is called a \emph{$\beta$-redex}. A
$\lambda$-term is in \emph{normal form} if it does not contain $\beta$-redex
\corr{subterms}. The least relation $\triangleright$ on terms satisfying
$ (\lambda x.t)u \triangleright t[x:=u]$ and closed \correc{under} context\correc{s} is
called \emph{$\beta$-reduction}.
  \item  A term $t$ is \emph{(weakly) normalizing} if there is \correc{a} finite
reduction sequence starting from $t$ and ending in a normal form.
  \item
 A term $t$ is \emph{strongly normalizing} \correc{(SN)} if all reduction sequences \correc{starting from $t$} are finite. \correc{If $t$ is \SN{}, we denote by $\eta(t)$ the length of its longest reduction}. The fact that such a longest reduction exists follows from K\"{o}nig's lemma. If $t$ is not
\SN{}, $\eta(t) = +\infty$.
\end{enumerate}

\end{defi}

\noindent\correc{In the $\lambda$-tree representation, a redex is a subtree of the $\lambda$-tree.}
Therefore $\beta$-reduction can be seen as \correc{an} operation on $\lambda$-trees (see Fig. \ref{fig:headlambdaterm_b}).

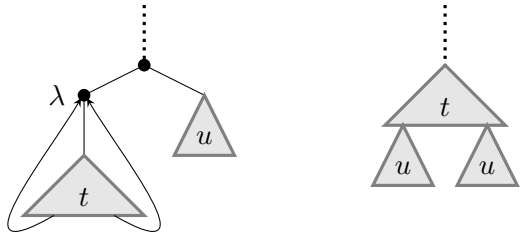
\begin{figure}[ht]
  \centering
  \begin{tikzpicture}[>=stealth,scale=.8]
    \draw[dotted, very thick] (-3,3) -- (-3,2);
    \app{(-3,2)}{(-4,1.5)}{(-2,1.5)}
    \lmbda{(-4,1.5)}{(-4,0.5)} {$\lambda$}{east}
    \subterm{(-4,0)}{1}{.5}{$t$}
    \subterm{(-2,1)}{.5}{.5}{$u$}
    \draw [->] (-4.5,-0.5) .. controls +(-150:2cm) and +(-120:1cm) .. (-4.05,1.45);
    \draw [->] (-3.5,-0.5) .. controls +(-30:2cm) and +(-60:1cm) .. (-3.95,1.45);
  \draw[dotted, very thick] (2,3) -- (2,2);
  \subterm{(2,1.5)}{1}{.5}{$t$}
  \subterm{(1.3,0.5)}{.5}{.5}{$u$}
  \subterm{(2.7,0.5)}{.5}{.5}{$u$}
  \end{tikzpicture}
\caption{\rm $\beta$-reduction scheme}
\label{fig:headlambdaterm_b}
\end{figure}

\begin{defi}
The \emph{size}  of a  term (denoted by $\size(\cdot)$) is defined recursively as follows:
\begin{enumerate}[(i)]
\item $\size(x) = 0$ if $x$ is a variable,
\item $\size(\lambda x.t)= 1 + \size(t)$,
\item $\size(t \ u)= 1 + \size(t) + \size(u)$.
\end{enumerate}
\end{defi}

As we can see, $\size (t) $ is the number of \correc{inner} nodes in \correc{the} $\lambda$-tree $G(t)$.

\begin{nota}
Let $n$ be an integer. We denote by $\Lambda_n$ the set of closed terms of size $n$. Obviously, the set $\Lambda_n$ is finite. We denote its cardinality by $L_n$.
\end{nota}

As far as we know, no asymptotic analysis of the sequence $\correc{\bigl(L_n\bigr)_{n\in\Nat}}$ has been done. Moreover, typical combinatorial techniques do not seem to apply easily for this task.

\subsection{Innocuous and safe $\lambda$-terms}

This sections introduces the notion of \emph{safe $\lambda$-terms} which is a sufficient condition for being SN (Proposition~\ref{width2sn}).

\begin{defi}[ ]~
\begin{enumerate}[(1)]
\item Let $t$ be a term of width $1$. We say that $t$ is {\em innocuous} if there is no binding $\lambda$ on the leftmost branch of $t$ (this includes the root of $t$).
\item \correc{We say that $t$ is {\em safe} if either it has width at most $1$ or if
  it has width $2$ and for $(u \ v)$ being the
  smallest subterm of $t$ of width $2$, at least one of the terms $u$ and $v$ is innocuous.}  
\end{enumerate}
\end{defi}

\begin{defi}~
\begin{enumerate}[$\bullet$]
\item A \emph{substitution} $\sigma$ is a partial map from variables to
terms such that the domain of $\sigma$ is finite. Let $t$ be a term and $\sigma$ be a substitution. By $t[\sigma]$ we denote the term obtained from $t$ by simultaneous replacement of all free occurrences of variables $x$ from the domain of $\sigma$ by $\sigma(x)$.
\item A context is a $\lambda$-term with a unique hole denoted by $[]$. Traditionally, contexts are defined by a BNF grammar:
$$E:=[] \ | \  \lambda x.E \ | \ (E \  \correc{\overline\Lambda}) \ | \ (\correc{\overline\Lambda} \ E) \ \textrm{where $\correc{\overline\Lambda}$ \corr{denotes} arbitrary terms.}$$
\item When $E$ is a context and $t$ is a term, $E[t]$ denotes the result of replacing  the hole in $E$ by $t$ allowing captures (i.e.  the
lambdas in $E$ can bind variables in $t$).
\item For a context $E$,we define  $\eta(E)$ as  $\eta(E[x])$ and $\size(E)$ as $\size(E[x])$ where $x$ is an arbitrary variable not captured by $E$.
\item In a few cases, we need contexts with multiple holes. When $E$ is a context with exactly $n$ holes\correc{, } $E[t_1,\dots,t_n]$ denotes the term where the holes of
  $E$ are substituted from the leftmost to the rightmost by terms
  $t_1,\dots,t_n$ (in this order).
\end{enumerate}
\end{defi}

\correct{
\noindent In some proofs in this section we use the following basic fact concerning strong normalization of $\lambda$-terms:

\begin{fact}\label{lambdabasic}
Let $t$ be a $\lambda$-term.
\begin{enumerate}[$\bullet$]
\item If $t = (x\,t_1\,\dots\,t_n)$, for some  variable $x$, with $n \geq 0$, then $\eta(t) = \eta(t_1) + \dots + \eta(t_n)$. Moreover $t$ is $\SN{}$ if and only
  if $t_1,\dots,t_n$ are \SN{}.
\item If $t = \lambda x.u$, then $\eta(t) = \eta(u)$ and $t$ is $\SN{}$ if and only if $u$ is \SN{}.
\item If $t = ((\lambda x.u)\,v\,t_1\,\dots\,t_n)$ with $n \geq 0$ and $t$ is $\SN{}$, then $\eta(t) > \eta(u[x:=v]\,t_1\,\dots\,t_n)$ and  $\eta(t) > \eta(v) + \eta(t_1) + \dots + \eta(t_n)$. Moreover $t$ is $\SN{}$ if and only if $v$ and $(u[x:=v]\,t_1\,\dots\,t_n)$ are \SN{}.
\end{enumerate}
These three cases cover all possible forms of $t$. Moreover, if $x$ is a variable, then $t$ is \SN{} if and only if $(t\,x)$ is \SN{}.
\end{fact}
}
\proof
This facts are ``folklore'', but they are not trivial to
prove directly from the definition of $\beta$-reduction and the proof 
is not found in the usual litterature. Here, we give a proof sketch
using the fact that Barendregt's \cite{BAR84}
{\em perpetual norm} (length of the {\em perpetual reduction strategy}) is in fact
the length of the longest reduction. This is proved in
\cite{regnier}.

The perpetual strategy is the strategy that reduces the left-most
redex first, except when this redex is a K-redex ($(\lambda x.u) t$
when $x$ is not free in $u$). In this case, the redex is reduced only
when $t$ and $u$ are normal. For a formal definition see \cite{BAR84}
or \cite{regnier}.

The equality about $\eta(t)$ in the first two items are immediate from
this, by induction on the length of the reduction.

Using the perpetual norm, we have
$$\eta((\lambda x.u)\,v\,t_1\,\dots\,t_n) = 1 +
max(\eta(u[x:=v]\,t_1\,\dots\,t_n), \eta(v) + \eta(u
\,t_1\,\dots\,t_n)).$$ The two terms in the max correspond
respectively to the case where $x$ occurs free in $u$ and the case where
the redex is a K-redex.

For the equivalence, one direction comes from the fact that subterms
and reducts of an $\SN{}$ term are  $\SN{}$. For the other direction we have to prove that if
$v$ and $(u[x:=v]\,t_1\,\dots\,t_n)$ are $\SN{}$ then so is
$t=((\lambda x.u)\,v\,t_1\,\dots\,t_n)$. This is done by induction on
$\eta(u)+\eta(v)  + \eta(t_1) + \dots + \eta(t_n)$ looking at the
different possible reductions of $t$.
 
The fact that if $t$ is \SN{} then so is $(t \ x)$ is proved using
the perpetual norm to establish that $\eta(t \ x) \leq \eta(t) + 1$
(in fact $\eta(t \ x) = \eta(t) + 1$ if $t$ reduces to a term starting
with $\lambda$ and $\eta(t \ x) = \eta(t)$ otherwise).
\qed

%

\begin{lem}\label{sn_l1}
The set of terms of width at most $1$ is closed under $\beta$-reduction.
\end{lem}

\proof
If a term is of width $0$, then \correc{no reduction can change the
width, since width $0$ just means that all variables in the term are free.}

Let $t$ be a term of width $1$. First, let us remark that all binding lambdas in $t$ occur on the same branch. We consider a $\beta$-reduction:
\[t = E[(\lambda x.u)\,v] \triangleright E[u[x:=v]] = t'.\] There are
two cases: either $x$ has no \correc{free} occurrences in $u$ and $t' = E[u]$ or it has some \correc{free} occurrence in $u$ and $v$ must have width $0$, which means that
\correc{every variable of $v$ is either free in $t$ or bound by some lambda occurring in the context $E$}. It is clear that $t'$ is still of width
$1$ because the binding lambdas remain on one branch.
\qed

\begin{lem}\label{sn_l2}
If $t$ is a term of lambda width at most $1$, then $t$ is \SN{}.
\end{lem}

\proof
Let $N_0(t)$ and $N_1(t)$ denote the number of, respectively,
non-binding and binding lambdas in term $t$. Let us introduce the
lexicographic order on pairs $\langle N_1(t), N_0(t)\rangle$. Let $t$
be of width at most $1$. Then, performing a $\beta$-reduction on $t$
decreases the pair $\langle N_1(t), N_0(t)\rangle$ while keeping the
width at most $1$ by Lemma \ref{sn_l1}.
To prove this, we consider a $\beta$-reduction: $t = E[(\lambda
x.u)\,v] \triangleright E[u[x:=v]] = t'$ and distinguish two cases:
\begin{enumerate}[$\bullet$]
\item \correc{If $x$ does not
  occur in $u$,
then $N_1(t)$ is non-increasing. Moreover, it is decreasing if $v$ contains some
binding lambdas or if $E$ \corr{binds} some variables that occur only in
$v$. Therefore, if $N_1(t)$ is constant, then $N_0(t)$ is decreasing: we erase at least one
non-binding $\lambda$ and do not transform binding ones \corr{into} non-binding
ones.} 
\item If $x$ occurs in $u$, then $v$ is of width
$0$ and contains no binding $\lambda$, which means that we erase one
binding $\lambda$ and only duplicate non-binding
lambdas. Therefore, $N_1(t)$ is decreasing.\qed
\end{enumerate}

\begin{lem}\label{width0fun}
If $u$ has width $0$ and $t_1,\dots,t_n$ are \SN{} terms, then the
term $t= (u\,t_1\,\dots\,t_n)$ is \SN{}.
\end{lem}

\proof
By induction on \correc{the} size of $u$. We distinguish three cases:
\begin{enumerate}[$\bullet$]
\item If $u = x$, the result is trivial by Fact \ref{lambdabasic}.
\item If $u = (u'\,v)$, $v$ has width $0$ and is \SN{} because of Lemma
  \ref{sn_l2}. We conclude by induction on $u'$.
\item For $u = \lambda x.u'$ we consider two cases: if $n = 0$, the result follows from Lemma \ref{sn_l2}; otherwise,
 by Fact \ref{lambdabasic}, it is enough to show that the head
 reduct of $t$ is \SN{}. But, since $u$ has width $0$, this reduct is
 $(u'\,t_2\,\dots\,t_n)$ and the result follows \correct{from the
 induction hypothesis}. \qed
\end{enumerate}

\begin{lem}\label{sn_l3}
  Let $t \in \SN{}$ be a term and $\sigma$ be a substitution such that, for each $x$, there is $k$ such that $\sigma(x) = (u\,v_1\,\dots\,v_k)$ where $u$ has width $0$ and $v_1$\,\dots\,$v_k$ are \SN{}. Then $t[\sigma] \in \SN{}$.
\end{lem}

\proof
By induction on $\langle\eta(t), \size(t)\rangle$ ordered lexicographically. We consider the following cases:
\begin{enumerate}[$\bullet$]
\item If $t = \lambda x.t_1$ or if $t = (x\,t_1\,\dots\,t_n)$ with $x$ not in the domain of $\sigma$, it is enough to prove that for all $i$, $t_i[\sigma]$ is \SN{}. This follows from the induction hypothesis because $\eta(t_i) \leq \eta(t)$ and $\size(t_i) < \size(t)$.
\item If $t = ((\lambda x.u)\,v\,t_1\,\dots\,t_n)$ we show that $v[\sigma]$ and $(u[x:=v]\,t_1\,\dots\,t_n)[\sigma]$ are \SN{} and apply Fact \ref{lambdabasic}.  This follows from the induction hypothesis
because \correc{$\eta(v) < \eta(t)$} for the first point and because
$\eta(u[x:=v]\,t_1\,\dots\,t_n) < \eta(t)$ for the second.
\item If $t = (x\,t_1\,\dots\,t_n)$ where $x$ is in the domain of
  $\sigma$. Then we have $t[\sigma]\! =\!
  (\sigma(x)\,t_1[\sigma]\,\dots\,t_n[\sigma])$ which is \SN{} by Lemma
  \ref{width0fun} because $t_1[\sigma], \dots, t_n[\sigma]$ are \SN{} by the induction hypothesis and $\sigma(x) = (u\,v_1\,\dots\,v_k)$ where $u$ has width $0$ and $v_1$\,\dots\,$v_k$ are \SN{}. \qed
\end{enumerate}

\begin{defi}
We define the set of contexts of width \correc{at most} $1$ by the following BNF grammar (where $\correc{\overline\Lambda_0}$ denotes the set of $\lambda$-terms of width $0$):
\[ E := [] \ | \ \lambda x. E \ | \ (E \ \correc{\overline\Lambda_0}) \ | \ (\correc{\overline\Lambda_0} \ E).\]
\end{defi}\medskip

\noindent This definition means that all the binding lambdas are on the path from the root to the hole of the context. 

\begin{lem}\label{sn_l4}
Let $E$ be a context of width $1$ and $u \in \SN{}$ be a term. Then $E[u] \in \SN{}$.
\end{lem}

\proof
By induction on $\size(E)$. Cases $E=[]$ or $E=\lambda x. E_1$ are trivial (in the second case, since $\size(E_1) < \size(E)$, the proof goes by the induction hypothesis).

If $E=(E_1 \ v)$, where $v \in \corr{\overline\Lambda_0}$, then $E[u]=(E_1[u] \ x)[x:=v]$ where $x$ is a fresh variable. $E_1[u]$ is \SN{} by induction hypothesis because $\size(E_1) < \size(E)$. Therefore $(E_1[u] \ x)$ is \SN{} by Fact \ref{lambdabasic} and finally $(E_1[u] \ x)[x:=v]$ is \SN{} by Lemma \ref{sn_l3}.

\correc{If $E=(v \ E_1)$, then $E[u]=(x\ E_1[u])[x:=v]$ where $x$ is a fresh
variable and $E_1[u]$ is \SN{} by induction hypothesis because
$\size(E_1) < \size(E)$. Therefore $(x\ E_1[u])$ is \SN{} and finally $(x\ E_1[u])[x:=v]$ is \SN{} by Lemma \ref{sn_l3}.}
\qed

\begin{prop}\label{width2sn}
\correc{All safe terms are \SN{}.}
\end{prop}

\proof
If $t$ has width at most one, the result follows directly from
Lemma \ref{sn_l2}. If $t$ has width $2$, let $(t_1 \ t_2)$ be the
smallest subterm of $t$ of width $2$. This means that $t$ can be
written \correc{as} $E[(t_1 \ t_2)]$ where $E$ is a context of width at most $1$ and
$t_1$ and $t_2$ are both of width $1$. By Lemma \ref{sn_l4}, it is
therefore enough to show that $(t_1 \ t_2)$ is \SN{}.

We know that $t$ is safe. This means that at least $t_1$ or $t_2$ is
innocuous.  If $t_i$ is innocuous, it can be written $F[(u \ v)]$ where $u$
has width $0$, $v$ has width $1$ and $F$ belongs to the family of contexts defined by
the following BNF grammar:
\[ F :=  [] \ | \ \lambda \_. F  \ | \ (F \ \corr{\overline\Lambda_0})\]
 where $\lambda \_$ denotes non-binding lambdas and $\corr{\overline\Lambda_0}$ denotes  terms of width 0.

The context $F$ is defined precisely to denote the beginning of the leftmost branch
until we reach an application node whose argument is of width $1$. The
definition of innocuous \corr{terms} together with the definition of width $1$ ensures the existence of such an
application node on the leftmost branch.

This means that $(t_1 \ t_2)$ can be written $(F[(u \ v)] \ t_2)$
($(t_1 \ F[(u \ v)])$ resp.). Let us define $t' = (F[x] \ t_2)$
(resp. $t' = (t_1 \ F[x])$), for a fresh variable $x$.

In both cases, $(t_1 \ t_2) = t'[x:=(u \ v)]$. We can conclude by
Lemma \ref{sn_l3} \corr{that $(t_1\ t_2)$} is \SN{} since $u$ has width $0$ and  $t'$ and
$v$ are \SN{} (by Lemma \ref{sn_l2}, since they have width $1$).
\qed

\subsection{Combinatory logic}\label{Combinatory Logic}

Combinatory logic is a theoretical model of computation introduced by
Moses Sch\"{o}nfinkel in \cite{schonfinkel} and many years later
rediscovered and deeply studied by Haskell Curry in
\cite{curry_feys}. For the main reference \correct{to} the subject we
refer to 
\cite{BAR84}.
A very intelligible approach towards this subject can be found in \cite{smullyan}.
It is a well known fact that both models, the lambda calculus and the combinatory logic, are equivalent in \correc{the} sense  of \corr{expressive} power. It turns out, however, that \corr{these} two models differ radically \corr{as regards} the behavior of random terms.

\begin{defi}{Combinatory logic}
\begin{enumerate}[(1)]
\item The set $\mathcal{F}$ of combinatory terms, {\it combinators}, is defined by the following grammar:
$${\mathcal F}:=  K  \; \mid \;  S  \; \mid \;  I \; \mid  ({\mathcal F} \; {\mathcal F}).$$
The notational conventions concerning parentheses are the same \corr{as} for
$\lambda$-terms \correct{i.e.\ } \corr{we write}  $t_1 \ t_2 \dots t_n$ without parentheses for $(\dots(t_1 \ t_2) \dots t_n)$.
\item  \correc{The {\it reduction} on combinators is the least
  compatible relation $\triangleright$ satisfying the following rules:}
$$K \; u \; v \rhd u \qquad S \; u\; v \; w \rhd  u \; w \; (v \; w) \qquad I \ u \rhd  u.$$\smallskip
\end{enumerate}
\end{defi}

\noindent Combinatory terms can be considered as rooted binary \corr{trees} \correc{whose} leaves are labeled \correct{with} combinators $K, S$ and $I$ and \correc{inner} nodes are labeled \correct{with} an application operation.
Accordingly, every reduction rule can be seen as a transformation of combinatory \corr{trees}.

\begin{defi}
A combinatory term is in {\it normal form} if no reduction can be performed.
A term $M$ is \emph{normalizing} if there is \correc{a} reduction sequence starting from $M$ and ending in a normal form $N$. A term $M$ is \emph{strongly normalizing} if all reduction sequences are finite. 
\end{defi}

\begin{defi}{Subterm and size}
\begin{enumerate}[(1)]
\item A combinator $u$ is a {\it subterm} of $v$ if either $u=v$ or $v$ is of the form $v_1 \; v_2$ and $u$ is a subterm of $v_1$ or $v_2$.
\item The {\it size} of a combinator is defined by the following rules:
$$\size(S)=\size(K)=\size(I)=0 \quad \text{and} \quad \size(u \ v)=1+\size(u)+\size(v).$$
\end{enumerate}
\end{defi}

As we can see $\size (t) $ is the number of \correc{inner} nodes of the combinatory tree of $t$.

\begin{nota}
For an integer $n$, we denote by $\mathcal{F}_n$ the set of combinatory terms of size $n$. The set $\mathcal{F}_n$ is finite and we denote its cardinality by $F_n$.
\end{nota}
\newcommand{\C}{\mathcal{C}}

\section{Combinatorial results}\label{maths}

The following standard notions will be used throughout the whole paper.

\begin{defi} Let $f,g \colon {\mathbb N} \to {\mathbb R}$.
\begin{enumerate}[(i)]
\item Functions $f$ and $g$ are said to be \emph{asymptotically equal}
 iff $\lim_{n \to \infty} \frac{f(n)}{g(n)}=1$. We denote it by $f \sim g$.
\item The \emph{asymptotic inequality} $f \gtrsim g$
holds iff there exists a function $h \colon {\mathbb N} \to
{\mathbb R}$ such that $h \sim g$ and $f(n) \geq h(n)$ for \corr{all}
$n$.
\item A function $f$ is said to be of the \emph{smaller order}
 than $g$ iff $\lim_{n \to \infty} \frac{f(n)}{g(n)} = 0$. We denote it by $f \in o(g)$.
\item A function $f$ is said to be \emph{subexponential} in $n$ iff
there exists $h \colon {\mathbb N} \to {\mathbb R}$ such that $h
\in o(n)$ and $f(n) = 2^{h(n)}$.
\item If $x$ is a real number we denote by $\lfloor  x  \rfloor$
(resp.\ $\lceil x \rceil$) the largest (resp.\ smallest) integer $n$
such that $n\leq x$ (resp.\ $x \leq n$).
\end{enumerate}
\end{defi}

\noindent {\bf  Notation}

When an unknown function $f$ is, for example, asymptotically equal
to an explicit function (say for example $n \mapsto n\ln(n)$) we will write
$f \sim n\ln(n)$ or sometimes $f(n) \sim n\ln(n)$.

\subsection{Generating function method}

\correct{Many questions concerning the asymptotic behavior of sequences of real positive numbers  can be
efficiently resolved by analyzing the behavior of 
their generating functions (see \cite{Wilf} for introductory reference).
This is the approach we take to determine the asymptotic fraction
of certain combinatory logic trees of a given size.

The \correc{following} theorem is a well-known result in the theory of generating functions.
 Its derivation from the Szeg\"o Lemma (see \cite{szego}) can be found, e.g., in \cite{zai06} (Theorem 22).} We denote by
 $[z^n]\{v(z)\}$ the coefficient of $z^n$ in the expansion of $v$.

\begin{thm}\label{glowniejsze}
Let $v$, $w$ be functions satisfying the following conditions:
\begin{enumerate}[\em(i)]
  \item $v, w$ are analytic in $|z|<1$ with $z=1$ being the only singularity on the circle $|z|=1$,
  \item $v, w$ have the following expansions in the vicinity of $z=1$:
$$ v(z)=\sum_{p\geq 0}v_p(1-z)^{p/2}, \qquad w(z)=\sum_{p\geq 0}w_p(1-z)^{p/2}$$
where $w_1 \neq 0$.
\end{enumerate}

Let $\widetilde{v}$ and $\widetilde{w}$ be defined by $\widetilde{v}(\sqrt{1-z}) = v(z)$ and $\widetilde{w}(\sqrt{1-z}) = w(z)$. Then
$$ \liminfty{\frac{[z^n]\{v(z)\}}{[z^n]\{w(z)\}}} = \frac{v_1}{w_1} = \frac{(\widetilde{v})'(0)}{(\widetilde{w})'(0)} .$$
\end{thm}

\subsection{Catalan numbers }
We denote by  $C(n)$ the $n$-th Catalan numbers, i.e., the number of binary
trees with $n$ inner nodes. We use the following classical result (see, for example, \cite[Ch. IV.1]{fs01}).

\begin{prop}\label{catalan}~
\begin{enumerate}[$\bullet$]
\item $C(n+1) = \sum_{i=0}^n C(i)C(n-i)$ for $n > 0$ and
  $C(0)=1$. From this we have $C(n+1) \geq \sum_{i=0}^n C(i)$.
\item $C(n) = \frac{1}{n+1}{2 n\choose n} = \prod_{i=2}^n
  \frac{n+i}{i}$. From this we have $\frac{C(n)}{C(n-1)} = \frac{2
    (2 n - 1)}{n + 1}$

 \item $C(n) \sim \frac{4^n}{n^{3/2}\sqrt{\pi}}$ and thus, for $n$ large enough,  we have\\
 $C(n) \geq \gamma \frac{4^n}{n^{3/2}}$  for some constant $0 < \gamma
 < 1$.
\end{enumerate}
\end{prop}

\subsection{Large Schr\"{o}der numbers}\label{schroder}

We denote by $M(n,k)$ the number of unary-binary trees with $n$
inner nodes and $k$ leaves. Let  $M(n) = \sum_{k \geq 1}
M(n,k)$ denote the number of unary-binary trees with $n$ inner
nodes. These numbers are known as the large Schr\"{o}der
numbers. Note that, since in this paper the size of \corr{variables} is
$0$, we use them instead of the so-called Motzkin numbers
which enumerate unary-binary trees with $n$ inner and outer nodes. We
use the following proposition.

\begin{prop}\label{motzkin}
\begin{enumerate}[$\bullet$]
  \item $M(n,k) = C(k-1){n+k-1 \choose n-k+1}.$
  \item $M(n) \sim \left(\frac{1}{3-2\sqrt{2}}\right)^n \frac{1}{\sqrt{\pi} n^{3/2}}.$
\end{enumerate}

\end{prop}
\proof
(1) Every unary-binary tree with $n$ inner nodes and $k$ leaves has
$k-1$ binary and $n-k+1$ unary nodes. We have $C(k-1)$ binary trees
with $k$ leaves. Every such a tree has $2 k-1$ nodes (inner nodes and leaves). Therefore there are ${n+k-1 \choose n-k+1}$ possibilities of inserting $n-k+1$ unary nodes (we can put \correct{a} unary node above every node of a binary tree).

(2) The asymptotics for $M(n)$ is obtained by using standard tools of the generating function \correc{method} (see, e.g., \cite[Ch.VII.4]{fs01} for exact computations).
\qed
\section{Densities}\label{densities}

\subsection{Main notations} \label{notation}
For any finite set $A$ we denote by $\#A$ its \corr{cardinality}.
To attribute a precise meaning to sentences like ``asymptotically \corr{almost} all $\lambda$-terms have property $P$'' we use the following definition of asymptotic density. 

\begin{defi}\label{density}
Let $B \subset \Lambda$, assume that $B$ contains closed terms of every large enough size. For $A \subseteq B$, if the  limit
\[\lim_{n\rightarrow\infty}\frac{\#(A\cap \Lambda_n)}{\#(B \cap \Lambda_n)}\]
exists, then we call it the \emph{asymptotic density} of $A$ in $B$ and denote it by $d_B (A)$.
\end{defi}

\noindent{\bf Remarks and notations}

\begin{enumerate}[$\bullet$]
  \item 
  The asymptotic density $d_B(A)$ can also be interpreted as an asymptotic probability of
finding a $\lambda$-term  from the class $A$ among all $\lambda$-terms
from $B$.
  \item $d_B$ is finitely additive: if $A_1$ and $A_2$ are
disjoint classes of $\lambda$-terms  such that $d_B (A_1)$ and $d_B
(A_2)$ exist then $d_B (A_1 \cup A_2)$ also exists and
 $d_B (A_1 \cup A_2) = d_B (A_1) + d_B (A_2) .$
  \item It is straightforward to observe that for any infinite $B$,
    meeting the condition of definition \ref{density}, and finite set
$A$ the density $d_B (A)$ exists and is $0$. Dually for co-finite
sets $A$ the density $d_B (A) = 1$.
\item The density $d_B$ is not countably additive,
so in general the formula

\begin{equation} \label{countably_additive}
d_B \left( \bigcup_{i=0}^\infty A_i \right) =  \sum_{i=0}^\infty
d_B( A_i) \nonumber
\end{equation}

is not true for all classes of \corr{pairwise disjoint} sets
$\set{A_i}_{i\in\mathbb{N}}$. A 
counterexample for the
equation
is to take $B = \Lambda$ and $A_i$ the singleton containing the $i$-th lambda
term from our language under any natural \corr{enumeration} of terms. On
the left hand side of the equation 
we get $d_{\Lambda}(\Lambda)$ which is $1$ but on
right hand side
$d_{\Lambda}( A_i) = 0$ for all $i\in \mathbb{N}$
and so the sum is $0$.
\item Let $P$ be a property of
\correc{closed} $\lambda$-terms.  If $d_{\Lambda}(\{t \in \Lambda \ | \ t \text{
satisfies } P \})=\alpha$, we  say that the density of terms
satisfying $P$ is $\alpha$. By analogy to research on graphs
and trees, whenever we say that ``a random term satisfies $P$'' we
mean that ``the density of terms satisfying $P$ is $1$''.

\end{enumerate}
\section{Proofs using calculus}\label{calculus}

In this section we state a few theorems which provide bounds for $L_n$ (the number of \correc{closed} $\lambda$-terms of size $n$).
We also find a lower bound for the unary height in a random term.

\subsection{Lower bound for $L_n$}\label{lower_bound}

The estimation for $L_n$ \correct{which} we provide is \corr{rather imprecise but sufficient for our purpose}.

\begin{thm}\label{lower}
For any $\varepsilon \in (0,4)$  we have
$$L_n \gtrsim \left(\frac{(4-\varepsilon)n}{\ln(n)}\right)^{n-\frac{n}{\ln(n)}} .$$
\end{thm}

\proof
Let $LB(n,k)$ denote the number of closed $\lambda$-terms of size $n$ with $k$ head lambdas and no other $\lambda$ below. Since the lower part of the term is a binary tree with $n-k$ inner nodes and each leaf can be bound by $k$ lambdas, we have $LB(n,k) = C(n-k) k^{n-k+1}$. Clearly, $L_n \geq LB(n,k)$ for all $k=1,\ldots,n$. Let $k= \left\lceil \frac{n}{\ln(n)} \right\rceil$. Then we get:
\begin{align*}
L_n &\geq C\left( n- \left\lceil \frac{n}{\ln(n)} \right\rceil \right) \left( \left\lceil \frac{n}{\ln(n)} \right\rceil \right) ^{n- \left\lceil \frac{n}{\ln(n)} \right\rceil \correc{+}1} & \\
&\sim \frac{4^{n- \left\lceil\frac{n}{\ln(n)}\right\rceil } }{ {\left( n- \left\lceil\frac{n}{\ln(n)}\right\rceil \right)^{3/2}\sqrt{\pi}}} \left( \left\lceil \frac{n}{\ln(n)} \right\rceil \right) ^{n- \left\lceil \frac{n}{\ln(n)} \right\rceil \correc{+}1} & \text{by Proposition \ref{catalan}} \\
&\gtrsim \left( \frac{4 n}{\ln(n)} \right) ^{n-\frac{n}{\ln(n)}} \frac{1}{p(n)} & \text{for some positive polynomial $p$} \\
&\gtrsim \left(\frac{(4-\varepsilon)n}{\ln(n)}\right)^{n-\frac{n}{\ln(n)}} & \text{since $\left(\frac{4}{4-\varepsilon}\right)^{n - \frac{n}{\ln n}} \gtrsim p(n)$}.
\end{align*}
\qed

\subsection{Number of lambdas in a term}

In this part we focus on the number of unary and binary nodes in random $\lambda$-terms. We need the following lemma:

\begin{lem}\label{incr_decr}
For all sufficiently large $n$, the function $f(p) = p^{n-p+1}$ is
\begin{enumerate}[\em(i)]
\item decreasing on $[\frac{3 n}{\ln(n)}, \correc{+\infty})$,
\item increasing on $(0, \frac{n}{3 \ln(n)}]$.
\end{enumerate}
\end{lem}

\proof
Let us start \corr{by} computing the derivative of the function $f$ on
$(0, +\infty)$:
$$f'(p) = \left( p^{n-p+1} \right)' = \left( e^{(n-p+1)\ln(p)} \right)' = e^{(n-p+1)\ln(p)} \left( \frac{n-p+1}{p} -\ln(p) \right) .$$

\begin{enumerate}[(i)]
\item We want to show that $f'(p) < 0$ for any $p \in \left[ \frac{3 n}{\ln(n)}, \correc{+\infty} \right)$. This is equivalent to the following inequality:
$n+1 < p(\ln(p) +1)$. The expression on the right reaches the minimum in the considered interval at $p = \frac{3 n}{\ln(n)}$, thus it is sufficient to prove that
$$n+1 < \frac{3 n}{\ln(n)} \left(\ln\left(\frac{3 n}{\ln(n)}\right) +1 \right) .$$
But the right expression is equal to
\begin{align*}
\frac{3 n}{\ln(n)}(\ln(n) - \ln(\ln(n)) + \ln 3 +1 ) \cr
&\hspace{-5em} = 2n +
\frac{n}{\ln(n)}(\ln(n) - 3\ln(\ln(n)) + 3\ln 3 +3 )\cr 
&\hspace{-5em}>n+1,
\end{align*}
  which finishes the proof. The last inequality is obvious for  sufficiently large $n$.

\item We want to show that $f'(p) > 0$ for any $p \in \left( 0, \frac{n}{3 \ln(n)} \right]$. This is equivalent to the following inequality:
$n+1 > p(\ln(p) +1)$. The expression on the right reaches the maximum in the considered interval at $p = \frac{n}{3 \ln(n)}$, thus it is sufficient to prove that
$$n+1 > \frac{n}{3 \ln(n)} \left(\ln\left(\frac{n}{3 \ln(n)}\right) +1 \right) .$$
But the right expression is equal to
\begin{align*}
\frac{n}{3 \ln(n)} \left(\ln(n) - \ln(\ln(n)) - \ln 3 +1 \right)\cr
&\hspace{-5em} = \correc{\frac{n}{3} - \frac{n}{3 \ln(n)} \left( \ln(\ln(n)) + \ln 3 - 1 \right)} \cr
&\hspace{-5em} = \correc{n - \frac{n}{3 \ln(n)} \left(2\ln(n) + \ln(\ln(n)) + \ln 3 - 1 \right)} \cr
&\hspace{-5em} < n+1,
\end{align*}

\noindent which finishes the proof. The last inequality is obvious for  sufficiently large $n$.
\qed
\end{enumerate}\nobreak
\noindent The next theorem shows that the typical proportion of unary nodes \correct{to}  binary ones in $\lambda$-terms is far from the typical proportion in ordinary unary-binary trees, in which case it tends \corr{to} a positive constant.\newpage

\begin{nota}
\label{nota:A}
Let ${\mathcal A}$ denote the class of closed terms $t \in {\mathcal A}$
that \correct{satisfies} all the following conditions:

\begin{enumerate}[(i)]
\item the number of lambdas in $t$ is at most $\frac{3 \size (t)}{\ln(\size (t))}$,
\item the number of lambdas in $t$ is at least $\frac{\size (t)}{3 \ln(\size (t))}$,
\item the unary height of $t$ is at least $\frac{\size (t)}{3 \ln(\size (t))}$.
\end{enumerate}
\end{nota}

\begin{thm}\label{lambda-bound}\label{depth-bound}
The density of ${\mathcal A}$ in $\Lambda$ is $1$.
\end{thm}

\proof
Let us consider terms of size $n$ with exactly $p$ lambdas. Such terms
have exactly $n-p+1$ leaves and each of them can be bound by at most
$p$ lambdas. Since the number of unary-binary trees of size $n$ and
with $n-p+1$ leaves is equal to $M(n,n-p+1)$ (see \ref{schroder}), we
obtain the following upper bound for the number of considered terms: $p^{n-p+1} M(n,n-p+1)$.

Now, we show that each of properties (i)--(iii) characterizing the class ${\mathcal A}$ is valid for random terms. Obviously, property (iii) implies property (ii), but our proof of (iii) uses (ii) as intermediate result so we make it explicit.

\begin{enumerate}[(i)]
\item Let $P_n$ denote the number of closed terms of size $n$ containing more
  than $\frac{3 n}{\ln(n)}$ lambdas. We have $P_n \leq \sum_{p
    \geq \frac{3 n}{\ln(n)}} p^{n-p+1} M(n,n-p+1)$.\\
By Lemma \ref{incr_decr} the function $p \mapsto p^{n-p+1}$ is decreasing in the interval $\left[\frac{3 n}{\ln(n)},n \right]$. Thus,
$$P_n \leq \sum_{p \geq \frac{3 n}{\ln(n)}} M(n,n-p+1) \left(\frac{3 n}{\ln(n)}\right)^{n+1-\frac{3 n}{\ln(n)}} \leq M(n) \left(\frac{3 n}{\ln(n)}\right)^{n+1-\frac{3 n}{\ln(n)}}.$$
By the lower bound for $L_n$ from \ref{lower_bound} and the computations above, we get
$$\frac{P_n}{L_n} \lesssim \frac{M(n)\left(\frac{3 n}{\ln(n)}\right)^{n+1-\frac{3 n}{\ln(n)}}} {\left(\frac{(4-\varepsilon)n}{\ln(n)}\right)^{n-\frac{n}{\ln(n)}}} .$$
To get the result it remains to show that for some $\varepsilon \in (0,4)$ this expression tends to 0.
By Proposition \ref{motzkin}, $M(n) \sim \left(\frac{1}{3-2\sqrt{2}}\right)^n \frac{1}{\sqrt{\pi} n^\frac{3}{2}}$. Using this equivalence, we deduce that there is some positive constant $\gamma$ such that we have:
\begin{align*}
\frac{P_n}{L_n} &\lesssim \gamma \frac{\left(\frac{1}{3-2\sqrt{2}}\right)^n \left(\frac{3 n}{\ln(n)}\right)^{n+1-\frac{3 n}{\ln(n)}}}{n^{\frac{3}{2}}\left(\frac{(4-\varepsilon) n}{\ln(n)}\right)^{n-\frac{n}{\ln(n)}}} & \\
&\lesssim \frac{\left(\frac{1}{3-2\sqrt{2}}\right)^n \left(\frac{3
      n}{\ln(n)}\right)^{n-\frac{3
      n}{\ln(n)}}}{\left(\frac{(4-\varepsilon) n}{\ln(n)}\right)
  ^{n-\frac{n}{\ln(n)}}} &  \hspace{-2em}  \text{since $\frac{3\gamma n}{\ln (n)} \lesssim n^{\frac32}$}\\
&= \left(\frac{3}{(4-\varepsilon)(3-2\sqrt{2})}\right)^n \left(\frac{3 n}{\ln(n)}\right)^{\frac{-3n}{\ln(n)}}\left(\frac{(4-\varepsilon) n}{\ln(n)}\right)^{\frac{n}{\ln(n)}} & \\
&= \left(\frac{3}{(4-\varepsilon)(3-2\sqrt{2})}\right)^n \left( \frac{3^{-3}(4-\varepsilon)\ln ^{2} (n)}{n^2} \right) ^{\frac{n}{\ln (n)}} &
\end{align*}
Notice that for any $\alpha$, $\left( n^{2-\alpha} \right) ^{n / \ln(n)} =
e^{\ln (n) {(2-\alpha) \frac {n}{ \ln (n)}}} = e^{(2-\alpha)n}$.
Thus, we obtain
$$\frac{P_n}{L_n} \lesssim \left(\frac{3}{(4-\varepsilon) (3-2\sqrt{2}) e^{2-\alpha}}\right)^n \left(3^{-3} (4-\varepsilon) \frac{\ln^{2}(n)}{n^{\alpha}}\right)^{\frac{n}{\ln(n)}}.$$
Let $\alpha$ and $\varepsilon$ be positive and small enough so that $3 < (4-\varepsilon) (3-2\sqrt{2}) e^{2-\alpha} $. Then the whole expression tends to $0$ as $n$ tends to infinity\correc{, which finishes the proof}.

\item Let $R_n$ denote the number of terms of size $n$ containing less
  than $\frac{n}{3 \ln(n)}$ lambdas. We have $R_n \leq \sum_{p \leq
    \frac{n}{3 \ln(n)}} p^{n-p+1} M(n,n-p+1) $.\\
By Lemma \ref{incr_decr} the function $p \mapsto p^{n-p+1}$ is
increasing in the interval $\left[0,\frac{n}{3 \ln(n)} \right]$. Thus,
\begin{align*}
R_n &\leq \sum_{p \leq \frac{n}{3 \ln(n)}} M(n,n-p+1) 
\left(\frac{n}{3 \ln(n)}\right)^{n+1-\frac{n}{3 \ln(n)}} \cr &\leq
M(n) \left(\frac{n}{3 \ln(n)}\right)^{n+1-\frac{n}{3 \ln(n)}}.
\end{align*}
By the lower bound for $L_n$ from Theorem \ref{lower} and the computations above, we get
\begin{align*}
\frac{R_n}{L_n} & \lesssim \frac{M(n)\left(\frac{n}{3\ln(n)}\right)^{n+1-\frac{n}{3\ln(n)}}} {\left(\frac{(4-\varepsilon)n}{\ln(n)}\right)^{n-\frac{n}{\ln(n)}}} & \\
& \lesssim \gamma \frac{\left(\frac{1}{3-2\sqrt{2}}\right)^n \left(\frac{n}{3\ln(n)}\right)^{n+1-\frac{n}{3\ln(n)}}} {n^{\frac{3}{2}}\left(\frac{(4-\varepsilon) n}{\ln(n)}\right)^{n-\frac{n}{\ln(n)}}} & \text{for some } \gamma > 0\\
&\lesssim \frac{\left(\frac{1}{3-2\sqrt{2}}\right)^n
  \left(\frac{n}{3\ln(n)}\right)^{n-\frac{n}{3\ln(n)}}}{\left(\frac{(4-\varepsilon)
      n}{\ln(n)}\right) ^{n-\frac{n}{\ln(n)}}} &\text{since $\frac{\gamma n}{3\ln (n)} \lesssim n^{\frac32}$}\\
&= \left(\frac{1}{3(4-\varepsilon)(3-2\sqrt{2})}\right)^n \left( \frac{3(4-\varepsilon)^3 n^2}{(\ln(n))^2} \right) ^{\frac{n}{3\ln (n)}} & \\
& = \left(\frac{e^{2/3}}{3(4-\varepsilon)(3-2\sqrt{2})}\right)^n \left( \frac{3(4-\varepsilon)^3 }{(\ln(n))^2} \right) ^{\frac{n}{3\ln (n)}} & \text{since } n^{\frac{2n}{3\ln(n)}} = e^{\frac{2}{3} n}.
\end{align*}
For $\varepsilon >0$ small enough the whole expression tends to $0$, \correct{which} finishes the proof.

\item Let $S_n$ be the number of closed terms of size $n$ with more than $\frac{n}{3 \ln(n)}$ lambdas and with the unary height less than $\frac{n}{3\ln(n)}$.
Such a term has at most $n - \frac{n}{3 \ln(n)} + 1$ leaves and each of them can be bound by one of at most $\frac{n}{3 \ln(n)}$ lambdas. Therefore, we have
$$ S_n \leq M(n) \left(\frac{n}{3\ln(n)}\right)^{n - \frac{n}{3 \ln(n)} + 1}$$
Dividing it by the lower bound for $L_n$ and performing exactly the same calculations as in the proof of (ii), we obtain the desired result.  \qed
\end{enumerate}

\subsection{Upper bound for $L_n$}\label{upper_bound}

Now we are ready to provide an upper bound for $L_n$. Once again, this estimation is very rough, however, it turns out to be sufficient for our main goal.

\begin{lem}\label{subexp}
Let $\alpha(n)$ be either ${n\mapsto \bigl\lceil\frac{3n}{\ln (n)}\bigr\rceil}$ or ${n\mapsto \bigl\lfloor\frac{3n}{\ln (n)}\bigr\rfloor}$.
Then the function $n \mapsto {3n \choose {\alpha(n)}}$ is subexponential. 
\end{lem}

\newcommand\myexpr{\frac{3n}{\ln (n)}}
\newcommand\roundup{{\left\lceil\myexpr\right\rceil}}

\proof
Using the Stirling formula
\[n! \sim \sqrt{2 \pi n} \left( \frac{n}{e} \right)^n\]
we obtain, for some polynomial function $\gamma(n)$, the asymptotic majoration:
\begin{align*}
{3n \choose {\alpha(n)}}
& \lesssim \gamma(n) \frac{(3n)^{3n}}{\left(3n-\roundup\right) ^{3n-\roundup} \left(\roundup\right) ^{\roundup}} \\
& \lesssim\gamma(n) E(n)
\end{align*}
where $E(n)$ can be written
\[E(n) = \frac{3^{3n}}{\left(3-\frac{\roundup}{n}\right)^{3n-\roundup}\left(\frac{\roundup}{n}\right)^{\roundup}}\]
Let us compute the logarithm of $E(n)$:
\begin{align*}
  \ln\bigl(E(n)\bigr) &= 3n \ln(3) - \left( 3n-\roundup\right) \ln \left(
    3-\frac{\roundup}{n} \right) - \roundup{}\ln \left(
    \frac{\roundup}{n} \right)\\
  &\leq 3n \ln(3) - \left( 3n-\myexpr-1\right) \ln \left(
    3-\frac{3}{\ln(n)}-\frac{1}{n} \right) - \left(\myexpr{} +1\right)\ln \left(
    \frac{3}{\ln(n)}\right)\\
\end{align*}
After some simplifications we obtain that $\ln\bigl(E(n)\bigr) \lesssim 3n \frac{\ln\ln(n)}{\ln(n)} + o \left( n \frac{\ln(\ln(n))}{\ln(n)} \right)$. Since the polynomial function $\gamma(n)$ belongs to ${o\left(e^{\alpha n\frac{\ln(\ln(n))}{\ln(n)}}\right)}$ for any positive $\alpha$, we finally deduce that:
\[{3n \choose \alpha(n)} \lesssim e^{\delta n \frac{\ln\ln(n)}{\ln(n)}} \quad \text{ for some } \delta > 0 .\eqno{\qEd} \]

\begin{thm}\label{upper}
For any $\varepsilon > 0$  we have $$L_n \lesssim \left(\frac{(12+\varepsilon) n}{\ln(n)}\right)^{n-\frac{n}{3\ln(n)}}$$
\end{thm}

\proof
Let $T_n$ be the number of terms of size $n$ with less than $\frac{3n}{\ln(n)}$ and more than $\frac{n}{3\ln(n)}$ lambdas. According to Theorem \ref{lambda-bound} we have $L_n \sim T_n$. In $\lambda$-terms enumerated by $T_n$ the number of binary nodes is at most $n - \frac{n}{3 \ln(n)}$ and the number of leaves is at most greater by one. We compute the upper bound for $T_n$ in the following way:
\begin{enumerate}[$\bullet$]
\item first, we consider binary trees built on at most $n - \left\lfloor\frac{n}{3 \ln(n)}\right\rfloor$ binary nodes --- their number does not exceed Catalan number $C \left( n- \left\lfloor \frac{n}{3\ln(n)} \right\rfloor +1 \right)$ (the $+1$ in the argument is obtained through Proposition~\ref{catalan} because we sum $C(i)$ over all possible $i$ up to $n - \left\lfloor\frac{n}{3 \ln(n)}\right\rfloor$),
\item then, we insert in such trees at most $\frac{3n}{\ln(n)}$ (the maximum number of lambdas) unary nodes --- this can be done in less than ${3n \choose \left\lceil\frac{3n}{\ln(n)}\right\rceil}$ ways ($3n-\left\lceil\frac{3n}{\ln(n)}\right\rceil$ is an upper bound for the number of possible places for insertions into a binary tree of size $n-\frac{n}{3\ln(n)}+1$),
\item finally, we have at most $n+1-\frac{n}{3\ln(n)}$ leaves in such trees and each of them can by bound by at most $\frac{3n}{\ln(n)}$ lambdas --- thus the number of possible ways of binding is not greater than $\left(\frac{3n}{\ln(n)}\right)^{n+1-\frac{n}{3\ln(n)}}$.
\end{enumerate}
Thus, we get
$$ T_n \lesssim C\left( n- \left\lfloor\frac{n}{3\ln(n)}\right\rfloor +1 \right) {3n \choose \roundup} \left(\frac{3n}{\ln(n)}\right)^{n+1-\frac{n}{3\ln(n)}} .$$
Using the asymptotic expansion of Catalan numbers (Proposition \ref{catalan}), we obtain
\begin{align*}
T_n &\lesssim {3n \choose {\roundup}} \frac{4^{n - \lfloor \frac{n}{3\ln(n) }\rfloor+1}}{\sqrt{\pi} \left(n - \frac{n}{3\ln(n)}+1 \right)^{3/2} } \left(\frac{3n}{\ln(n)}\right)^{n+1-\frac{n}{3\ln(n)}} \\
& \lesssim {3n \choose {\roundup}} \left(\frac{12 n}{\ln(n)}\right)^{n-\frac{n}{3\ln(n)}} \\
& \lesssim \left(\frac{(12+\varepsilon) n}{\ln(n)}\right)^{n-\frac{n}{3\ln(n)}},
\end{align*}
for any $\varepsilon>0$. The last line follows from the fact that ${2n+1 \choose {\roundup}}$ is subexponential (by Lemma \ref{subexp}).
\qed

\textbf{Remark.} The ratio between the upper and lower bounds obtained for $L_n$ is exponential, but $L_n$ is super-exponential itself.





\section{Proofs using coding}\label{coding}

In this section we prove theorems about random $\lambda$-terms using the following scheme. First, we consider a set
$\Lambda_n(\mathcal{P})$ of terms of size $n$  satisfying some
property $\mathcal{P}$. Next, we define an injective and
size-preserving function $\varphi^{\mathcal P}_n \colon
\Lambda_n(\mathcal{P}) \to \Lambda_n$ (called a {\it coding}) such
that its image has density $0$ among all closed lambda
terms. \corr{This is sufficient to prove that this property is not
  satisfied by random terms.}

We consider successive sets of terms $X_1,\ldots, X_k$ with $X_{i+1}\subseteq X_i$ and we prove:
\begin{enumerate}[(1)]
\item $X_1$ has density $1$ (Theorem \ref{depth-bound});
\item $X_{i+1}$ has density $1$ because $X_i\setminus X_{i+1}$ has density $0$ (successive theorems of this section).
\end{enumerate}
By choice of $X_k$, we finally get that SN terms have density $1$.
Below, these sets $X_1,X_2,\ldots$ are denoted ${\mathcal A,B,}\ldots$ and depend on some parameters (integers or functions).

Some 
proofs need the following lemma:

\begin{lem}\label{convergence}
  Let $A_n$ be a sequence of \correc{non empty finite sets} of terms and $B_n$
  be subsets of $A_n$.
  Let
  $(A_{n,i})_{i \in I_n }$ be a partition of $A_n$ and let
  $B_{n,i} =A_{n,i} \cap B_n$. Let $a_n$ (resp.\ $b_n$, $a_{n,i}$, $b_{n,i}$) be the
  cardinality of $A_n$ (resp.\ $B_n$, $A_{n,i}$, $B_{n,i}$).  Assume
  $\frac{b_{n,i}}{a_{n,i}}$ tends to 0 uniformly in $i$ as $n$ tends
  to infinity, formally:
  \[\forall \varepsilon >0, \exists N, \forall n \geq N, \forall i\in I_n:\frac{b_{n,i}}{a_{n,i}} \leq \varepsilon.\]
  Then $\frac{b_n}{a_n}$ tends to 0 as $n$ tends to infinity.
\end{lem}

\proof
Let $\varepsilon>0$. Let $N$ be the corresponding integer guaranteed by the uniform convergence and let $n$ be any integer with $n\geq N$. We have:
\begin{align*}
  \frac{b_n}{a_n} = \frac{\sum_{i\in I_n} b_{n,i}}{a_n}
  = \sum_{i\in I_n} \frac{b_{n,i}}{a_{n,i}}\frac{a_{n,i}}{a_n}
  \leq \sum_{i\in I_n} \varepsilon \frac{a_{n,i}}{a_n}
  \correc{=} \varepsilon.
\end{align*}
We have shown ${\displaystyle\lim_{n\rightarrow\infty}\frac{b_n}{a_n}= 0}$.
\qed

\subsection{The number of lambdas in head position}

We start with showing that a random term starts with a long chain of lambdas. In the next theorem and until the end of the paper, we denote by $g$ a lower bound on the length of this chain (as a function of the size of the term). Theorem~\ref{th:starting_lambdas} below shows that any $g \in o\big( \sqrt{n/\ln(n)} \big)$ is an admissible lower bound. However, the reader can think of $g$ as the function ${n\mapsto\ln(n)^2+3}$ since the main theorem (Theorem~\ref{main}) and all intermediate results can be proved using this particular choice of $g$ (see Proposition~\ref{prop:unsafedensity}).

\begin{nota}
  Let $g \colon {\mathbb N} \to {\mathbb N}$
  We define ${\mathcal B}^g$
  as the class of terms $t$ such that
\begin{enumerate}[1.]
\item $t \in {\mathcal A}$ \correc{(see Notation~\ref{nota:A})},
\item $t$ has at least $g(\size (t))$ head lambdas.
\end{enumerate}
Additionally, we denote by $\overline{\mathcal{B}^g}={\mathcal A} \setminus {\mathcal B}^g$ the complement of the set ${\mathcal B}^g$ in ${\mathcal A}$ and by $\overline{\mathcal{B}^g_n}$ the set of terms from $\overline{\mathcal{B}^g}$ of size $n$.
\end{nota}

\begin{thm}\label{th:starting_lambdas}
Let $g \colon {\mathbb N} \to {\mathbb N}$ be a function such that
$g \in o\big( \sqrt{n/\ln(n)} \big)$. The density of ${\mathcal
B}^g$ in $\Lambda$ is $1$.
\end{thm}
\newcommand\bbgn{\overline{\mathcal{B}^g_n}}
\newcommand\bbgnpart[1]{\bbgn(#1)}
\newcommand\tvec{\vec{t}}
\newcommand\bbgntl{\bbgnpart{\vec{t},\ell}}

\proof
  Let us fix $g \in o\big( \sqrt{n/\ln(n)} \big)$. Our aim is to
  construct a family of injective and size-preserving functions
  (codings) $\varphi_n^{\mathcal{B}} \colon \bbgn
  \to \Lambda_n$ such that the fraction ${\#\varphi_n^{\mathcal{B}}
    \left( \bbgn \right) } / L_n$ tends to $0$ as
  $n$ tends to infinity.

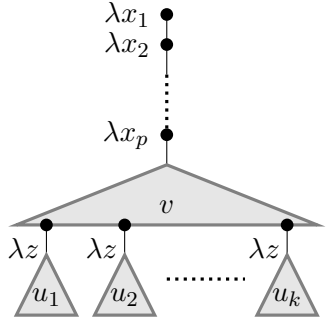
\begin{figure}[ht]
  \centering
  \begin{tikzpicture}[scale=.8]
    \lmbda{(0,4)}{(0,3.5)}{$\lambda x_1$}{east}
    \lmbda{(0,3.5)}{(0,3)}{$\lambda x_2$}{east} \draw[dotted, very
    thick] (0,3) -- (0,2); \lmbda{(0,2)}{(0,1.5)}{$\lambda x_p$}{east}
    \subterm{(0,1)}{2.5}{.5}{$v$} \lmbda{(-2,.5)}{(-2,0)}
    {$\lambda
      z$}{north east} \subtermspec{(-2,-.5)}{.5}{.5}{$u_1$}{(0,-.2)}
   \lmbda{(-.7,.5)}{(-.7,0)}{$\lambda z$}{north east}
    \subtermspec{(-.7,-.5)}{.5}{.5}{$u_2$}{(0,-.2)}
    \lmbda{(2,.5)}{(2,0)}{$\lambda
      z$}{north east} \subtermspec{(2,-.5)}{.5}{.5}{$u_k$}{(0,-.2)}
    \draw[dotted, very thick] (0,-.4) -- (1.3,-.4);
  \end{tikzpicture}

  \caption{\rm A term from $\overline{\mathcal{B}^g_n}(\protect\vec{t},\ell)$ where $\protect\vec{t} =(\lambda z.u_1,\ldots,\lambda z.u_k)$
  }
  \label{fig:headlambdaterm}
\end{figure}

  Let $n_0 > 1$ be such that $g(n) <
  \frac{n}{3\ln(n)}$ for all $n \geq n_0$. Such $n_0$ exists
  because $g \in o\big(\sqrt{n/\ln(n)} \big)$. In the rest of the proof we always assume that $n
  \geq n_0$.

We define a partition 
of  $\bbgn$ as follows \corr{(see
Figure~\ref{fig:headlambdaterm})}. Let $\tvec$ be a non-empty sequence of
  (not necessarily closed) terms such that each of the elements of
  $\vec{t}$ starts with a $\lambda$. Let $\ell\geq 1$ be an
  integer such that $0 \leq n - \ell - \size (\vec{t}\,) \leq
  g(n)$, where $\size (\vec{t}\,)$ denotes the sum of sizes of
  its components. We define $\bbgntl$ as the set of terms of the form:
  \[\lambda x_1\ldots\lambda x_p .v[t_1,\ldots,t_k]\]
  where $v$ is a purely applicative context with $k$ holes,
  $\tvec=(t_1,\ldots,t_k)$ and ${p=n-\ell - \size(\tvec\,)}$. Therefore,
$\ell$ is the size of the applicative context $v$ (where the hole are
counted with size $0$ like variables).

\noindent   First, it is clear that nonempty sets $\bbgntl$ form a partition of
  $\overline{{\mathcal B}^g_n}$: they are pairwise disjoint by
  definition and every $u\in\bbgn$ belongs to $\mathcal A$ so it
  contains some $\lambda$ not in the chain of head lambdas (because $p
  \leq g(n) <
  \frac{n}{3\ln(n)}$),
  therefore it belongs to some $\bbgntl$ for some non-empty $\tvec$
  and some $\ell\geq 1$.

  Terms from $\bbgntl$ differ only by applicative contexts, so
  the cardinality of $\bbgntl$ is less than the number of all binary
  trees of size $\ell$ in which each leaf is either labeled with a
  variable (for which we have at most ${g(n)-1}$ possibilities) or is
  an empty place where some sub-term can be plugged. Thus, we have for
  all $n \geq n_0$:
  \[\#\bbgntl\leq P(n, \ell):=C(\ell)(g(n))^{\ell+1}.\]
Let ${t\in \bbgntl}$
and
$\vec{t}=(t_1,\ldots,t_k)$ for some $k\geq 1$ and $v$
be the purely applicative context in the decomposition of $t$.
We can write $t_i=\lambda z.u_i$. Consider the term
\[t'=\lambda z \lambda x_1\ldots\lambda x_p.(u_1 \ (u_2 \ (\ldots(u_{k-1} \ u_k)\ldots) ))\]
which is of size
\[n-\ell = n\ \underbrace{-\ \ell}_{v\text{ \tiny removed}}\
\underbrace{-\ k}_{\text{\tiny \corr{head lambdas from $t_i$ removed}}}\ \underbrace{+\ 1}_{\text{\tiny head $\lambda z$}}\ \underbrace{+\ k-1}_{\text{\tiny applicative nodes}}.\]
\correc{We rename bound variables,
so that a variable distinct from $z$ in $t$ is renamed to  $x_k$  where $k$ is number of lambdas from the root to the lambda binding that variable (inclusive).  Let $V_n$
be the set of variables $\{x_1,\ldots,x_{\left\lceil\frac{n}{3\ln(n)}\right\rceil}\}$.}
Let $\lambda y.s$ denote the term
rooted at the leftmost deepest $\lambda$ of term $t'$.

Since the unary height of
$t$ is the same as \corr{that} of $t'$, \correc{and since $t\in\mathcal{A}$, 
all the variables in $V_n\,$ are bound on the
path from the root to $\lambda y.s$ (in the worst case, $y$ is
$x_{\left\lceil\frac{n}{3\ln(n)}\right\rceil}$ and must also be
counted on the path).}

Let $U_{n,l}$ be the set of purely applicative (therefore not closed) terms
of size $\ell-1$ whose variables are chosen from $V_n$.There are at
least
$$Q(n,\ell)=C(\ell-1) \left( {\frac{n}{3\ln(n)}} \right) ^\ell$$
elements in $U_{n,l}$.

Let $\psi(n,\ell)=\frac{P(n,\ell)}{Q(n,\ell)}$. By the assumption \correct{about}
$g$, there is a function $\varepsilon$ such that
$\displaystyle\lim_{n\rightarrow\infty}\varepsilon(n)=0$ and
$P(n,\ell)\leq
C(\ell)\left(\sqrt{\frac{n}{\ln(n)}}\varepsilon(n)\right)^{\ell+1}\!\!.$ Therefore,
we have \[\psi(n,\ell)\leq\frac{C(\ell)}{
  3\,C(\ell-1)}\left(\frac{n}{\ln(n)}\right)^{\frac{1-\ell}{2}}(3\,\varepsilon
(n))^{\ell+1}.\]
For $\ell \geq 1$, $\left(\frac{n}{\ln(n)}\right)^{\frac{1-\ell}{2}}$
is decreasing in $\ell$ and since \correc{$\frac{C(\ell)}{C(\ell-1)} = \frac{2(2\ell-1)}{\ell+1}$}, it follows that $\psi(\ell,n)$ tends to $0$ uniformly in
$\ell$.

\correc{From this, for $n$ large enough, we get $P(n,\ell) < Q(n,\ell)$
  (uniform convergence of $\psi$ is needed only later) and there exists an
injective function $h_{n,\ell}$ which assigns  an
element from $U_{n,l}$ to any purely applicative
context using variables in $\{x_1,\ldots,x_p\}$ (i.e.\ applicative context $v$ used in the decomposition of a term in
$\overline{\mathcal{B}^g_n}(\protect\vec{t},\ell)$).}

For any $u\in U_{n,l}$, let $\rho(t',u)$ be the term obtained by substituting
the subterm $\lambda y.s$ in $t'$ with $\lambda y.(u \ s)$.

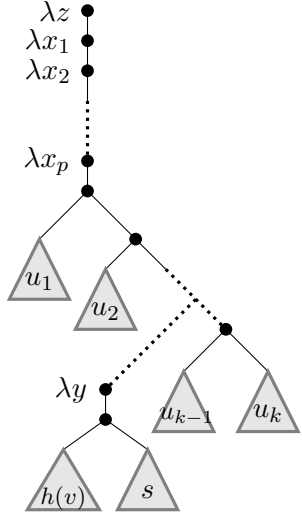
\begin{figure}[ht]
  \centering
  \begin{tikzpicture}[scale=.8]
    \lmbda{(-1.3,5.3)}{(-1.3,4.8)}{$\lambda z$}{east}
    \lmbda{(-1.3,4.8)}{(-1.3,4.3)}{$\lambda x_1$}{east}
    \lmbda{(-1.3,4.3)}{(-1.3,3.8)}{$\lambda x_2$}{east} \draw[dotted,
    very thick] (-1.3,3.8) -- (-1.3,2.8);
    \lmbda{(-1.3,2.8)}{(-1.3,2.3)}{$\lambda x_p$}{east}
    \app{(-1.3,2.3)}{(-2.1,1.5)}{(-.5,1.5)}
    \subtermspec{(-2.1,1)}{.5}{.5}{$u_1$}{(0,-.2)}
\app{(-.5,1.5)}{(-1,1)}{(0,1)}
    \subtermspec{(-1,.5)}{.5}{.5}{$u_2$}{(0,-.2)} \draw[dotted,very thick](0,1) -- (1,0); \draw[dotted, very thick] (.5,.5) -- (-1,-1);
    \lmbda{(-1,-1)}{(-1,-1.5)}{$\lambda y$}{east}
    \app{(-1,-1.5)}{(-1.7,-2)}{(-.3,-2)}
    \subtermspec{(-1.7,-2.5)}{.6}{.5}{\footnotesize $h(v)$}{(0,-.3)}
    \subtermspec{(-.3,-2.5)}{.5}{.5}{$s$}{(0,-.2)}
    \app{(1,0)}{(.3,-.7)}{(1.7,-.7)}
    \subtermspec{(1.7,-1.2)}{.5}{.5}{$u_k$}{(0,-.2)}
   \subtermspec{(.3,-1.2)}{.5}{.5}{{\small $u_{k-1}$}}{(.05,-.25)}
  \end{tikzpicture}
  \caption{The term $\varphi_{n}^{\mathcal{B}}(t)$ from Theorem \ref{th:starting_lambdas}}
\label{fig:headlambdacode}
\end{figure}

\correc{Let $\varphi_{n,\vec{t},\ell}(t)=\rho(t',h_{n,\ell}(v))$} (see
Figure~\ref{fig:headlambdacode}). It is easy to check that the
size of $\varphi_{n,\vec{t},\ell}$ is $n$ and that, by
the injectivity of $h_{n,\ell}$, $\varphi_{n,\vec{t},\ell}$ is
injective, too.

Let $\varphi_{n}^{\mathcal{B}} = \bigcup_{\ell, \vec{t}}
\varphi_{n,\vec{t},\ell}$. The function $\varphi_{n}^{\mathcal{B}}$ is
an injection because codomains of the
$\varphi_{n,\vec{t},\ell}$ are all disjoint by 
construction. Since the sets $\bbgntl$ form a partition of
$\overline{\mathcal{B}^g_n}$, \correc{by means of \corr{Lemma} \ref{convergence}}, it is
enough to show that
$\frac{P(n,\ell)}{Q(n,\ell)}$ tends uniformly in $l$ to $0$ as $n$
tends to infinity, which was done above.
\qed

\subsection{Head lambdas bind ``many'' occurrences}

Now we are ready to present some theorems showing that in a random term head lambdas are used, i.e.\ they really bind some variables.
The first result shows that in a random term many of head lambdas are binding.

\begin{nota}
Let $g \colon {\mathbb N} \to {\mathbb N}$ be a function such that
$g \in o\big( \sqrt{n/\ln(n)} \big)$. By ${\mathcal D}^g$ we
denote the class of terms such that $t \in {\mathcal D}^g$ iff
\begin{enumerate}[1.]
\item $t \in {\mathcal B}^{g+1}$, where $g+1$ is the function $n \mapsto g(n)+1$,
\item each of first $g(\size (t))$ head lambdas in $t$ is binding.
\end{enumerate}
Additionally, we denote by $\overline{\mathcal{D}^g_n} = {\mathcal B}^{g+1} \setminus {\mathcal D}^g$ the \correc{complement} of the class ${\mathcal D}^g$ in ${\mathcal B}^{g+1}$ and by $\overline{\mathcal{D}^g_n}$ the set of terms from $\overline{\mathcal{D}^g}$ of size $n$.

\end{nota}

\begin{thm}\label{binding1}
Let $g \colon {\mathbb N} \to {\mathbb N}$ be a function such that
$g \in o\big( \sqrt{n/\ln (n)} \big)$. The density of ${\mathcal
D}^g$ in $\Lambda$ is $1$.
\end{thm}

\proof
Let us fix $g \in o\big( \sqrt{n/\ln (n)} \big)$. We construct a
family of codings $\varphi_{n}^{\mathcal{D}} \colon
\overline{\mathcal{D}^g_n} \to \Lambda_n$ such that their images
are negligible in $\Lambda _n$, i.e.  the fraction
$\sharp \varphi_n^{\mathcal D} (\overline{\mathcal{D}^g_n}) / L_n$ tends to $0$ as
$n$ tends to infinity.

Let $t=\lambda x_1 \ldots x_{g(n)+1}.u$ be a term from $\overline{\mathcal{D}^g_n}$ and let $i \leq g(n)$ be the smallest integer such that the $i$-th head lambda in $t$ does not bind any variable. Take
$$\varphi_n^{\mathcal{D}}(t) := \lambda x_1 \ldots x_{i-1} x_{i+1}.\big(x_{i+1} \ (\lambda x_{i+2}\ldots x_{g(n)+1}.u)\big).$$
The size of $\varphi_n^{\mathcal{D}}(t)$ is $n$. Terms from the set $\varphi_n^{\mathcal{D}}(\overline{\mathcal{D}_n^g})$ have less than $g(n)\correc{+1}$ head lambdas. By Theorem \ref{th:starting_lambdas}, the density of such terms in $\Lambda $ is zero. Since the function $\varphi_{n}^{\mathcal{D}}$ is injective, the density of $\overline{\mathcal{D}^g}$ is also zero.
\qed

\begin{nota}
Let $g,h \colon {\mathbb N} \to {\mathbb N}$ be functions such
that $g \in o\big( \sqrt{n/\ln (n)} \big)$, $g(n) \geq 3$ for all
$n$ and $h \in o\left(\log
_3\left(\frac{n}{\ln(n)}\right)\right)$. By ${\mathcal E}^{g,h}$
we denote the class of closed terms such that $t \in {\mathcal
E}^{g,h}$ iff
\begin{enumerate}[1.]
\item $t \in {\mathcal D}^g$,
\item the total number of occurrences of variables bound by \corr{the} first three lambdas in $t$ is greater than $h(\size (t))$.
\end{enumerate}
Additionally, we denote by $\overline{\mathcal{E}^{g,h}} = {\mathcal D}^{g} \setminus {\mathcal E}^{g,h}$ the complement of the class ${\mathcal E}^{g,h}$ in ${\mathcal D}^{g}$ and by $\overline{\mathcal{E}^{g,h}_n}$ the set of terms from $\overline{\mathcal{E}^{g,h}}$ of size $n$.
\end{nota}

\begin{thm}\label{binding2}
Let $g,h \colon {\mathbb N} \to {\mathbb N}$ be functions such
that $g \in o\big( \sqrt{n/\ln (n)} \big)$, $g(n) \geq 3$ for all
$n$ and $h \in o\left(\log
_3\left(\frac{n}{\ln(n)}\right)\right)$. The density of ${\mathcal
E}^{g,h}$ in $\Lambda$ is $1$.
\end{thm}

\proof
Let $g$ and $h$ be functions as in the assumptions of the theorem. We construct a family of codings $\varphi^{\mathcal{E}}_{n} \colon \overline{\mathcal{E}^{g,h}_n} \to \Lambda_n$ such that their images are negligible in $\Lambda _n$ as $n$ tends to infinity.

Let us define an equivalence relation $\sim_n$ on the set of terms of size $n$ in the following way: $u \sim_n v$ iff $u$ and $v$ are equal after substituting all occurrences of variables bound by first three lambdas by the variable bound by the first $\lambda$. Let us denote by $[u]$ the equivalence
class of $u$.

Let $t=\lambda x_1 \lambda x_2 \lambda x_3 .u$ be a term from $\overline{\mathcal{E}^{g,h}_{n}}$. There are at most $3^{h(n)}$ elements in the class $[t]$.

Let $\psi(t)=\lambda x y.u[x_1:=y, x_2:=y, x_3:=y]$. The size of $\psi(t)$ is $n-1$. Let $\lambda a.v$ \corr{be} the subterm of $\psi(t)$ such that $\lambda a$ is the leftmost deepest $\lambda$ in $\psi(t)$.
Denote by $V(t)$ the set of variables introduced by lambdas occurring in $\psi(t)$ on the path from $\lambda a$ to $\lambda y$.
Note that the variable $x$ \corr{occurs \correct{neither} in $\psi(t)$ \correct{nor}} in $V(t)$.

By Theorem \ref{depth-bound}(iii), there are at least
$\frac{n}{3\ln(n)}-2$ such lambdas.
As $h \in o\left(\log
_3\left(\frac{n}{\ln(n)}\right)\right)$, we have
$$\lim_{n\to \infty}\frac{3^{h(n)}}{\big(\frac{n}{3\ln(n)}-2\big)}=0.$$
Thus, we can find for each class $[t]$ an injective function $f_{[t]}$ from $[t]$ into the set $V(t)$.

We define $\varphi^{\mathcal E}_{n}(t)$ as the term obtained from $\psi(t)$ by replacing the subterm $\lambda a.v$ with $\lambda a. (w \ v)$, where $w=f_{[t]}(t)$.

All terms from the image $\varphi^{\mathcal E}_n(\overline{\mathcal{E}^{g,h}_{n}})$ start with a $\lambda$ that binds no variable. By Theorem \ref{binding1} we know that such terms are negligible in $\Lambda_n$. Since $\varphi^{\mathcal E}_n$ is injective, the density of $\overline{\mathcal{E}^{g,h}}$ is zero, as well.
\qed

\begin{nota}
Let $k$ and $\ell$ be natural numbers. Let $g \colon {\mathbb N} \to
{\mathbb N}$ be functions such that $g \in o\big( \sqrt{n/\ln (n)}
\big)$, $g(n) \geq 3$ for all $n$, $\lim_{n\rightarrow\infty}g(n)=\infty$, and let $h(n)=\left\lfloor
\sqrt{\log _3 \left(\frac{n}{\ln(n)}\right)} \right\rfloor$.
Notice that $h \in o\left(\log
_3\left(\frac{n}{\ln(n)}\right)\right)$. By ${\mathcal G}^{g,
k,\ell}$ we denote the class of closed terms such that $t \in
{\mathcal G}^{g,k,\ell}$ iff
\begin{enumerate}[1.]
\item $t \in {\mathcal E}^{g,h}$,
\item each of first $k$ lambdas in $t$ binds more than $\ell$ variables.
\end{enumerate}
Additionally, we denote by $\overline{\mathcal{G}^{g,k,\ell}} = {\mathcal E}^{g,h} \setminus {\mathcal G}^{g,k,\ell}$ the complement of the class ${\mathcal G}^{g,k,\ell}$ in ${\mathcal E}^{g,h}$ and by $\overline{\mathcal{G}^{g,k,\ell}_n}$ the set of terms from $\overline{\mathcal{G}^{g,k,\ell}}$ of size $n$.
\end{nota}

\begin{thm}\label{binding3}
Let $k$ and $\ell$ be integers. Let $g \colon {\mathbb N} \to
{\mathbb N}$ be a function such that $g \in o\big( \sqrt{n/\ln
(n)} \big)$, $\lim_{n\rightarrow\infty}g(n)=\infty$, and $g(n) \geq 3$ for all $n$. The density of
${\mathcal G}^{g,k,\ell}$ in $\Lambda$ is $1$.
\end{thm}

\proof
Let $g$ be a function as in the assumptions of the theorem and let us fix integers $k$ and $\ell$. Without loss of generality we can assume that $k \geq 3$. By Theorem \ref{binding2}, the total number of occurrences of variables bound by first $k$ lambdas in terms from $\overline{\mathcal{G}^{g,k,\ell}_n}$ is greater than $h(n) = \left\lfloor \sqrt{\log _3 \left(\frac{n}{\ln(n)}\right)} \right\rfloor$.

For $m \geq h(n)$ let us denote by $\mathcal{E}^{g,h}_n (m,k)$ the set of terms from ${\mathcal{E}^{g,h}_n}$ with exactly $m$ leaves bound by \correc{the} first $k$ lambdas and let $\overline{\mathcal{G}^{g,k,\ell}_{n}}(m) = \overline{\mathcal{G}^{g,k,\ell}_n} \cap {\mathcal{E}^{g,h}_n}(m,k)$. By definition, terms from $\overline{\mathcal{G}^{g,k,\ell}_{n}}(m)$ have exactly $m$ leaves bound by \corr{the} first $k$ lambdas and at least one of \correct{these} lambdas binds at most $\ell$ variables.

Consider the equivalence relation $\sim_n$ on
${\mathcal{E}^{g,h}_n(m,k)}$ defined analogously \correc{to} the
relation \corr{with the same notation within} the proof of Theorem \ref{binding2}, but with respect to the first $k$ (instead of three) head lambdas. \correc{Denote by $[t]$ the equivalence class of $t$ for \correct{this} relation.}

Let $t \in {\mathcal{E}^{g,h}_n}(m,k)$. By hypothesis on $g$ and for large enough $n$, each of \correc{the} first $k$ head lambdas of $t$ are binding. \correc{Of the $m$ leaves bound by these lambdas, give the $k$ leftmost leaves distinct labels}. For each of $m-k$ other leaves we have $k$ possibilities. Thus, we know that the cardinality of $[t]$ is greater than $k^{m-k}$.

Now, let us estimate the upper bound for the cardinality of $[t]\cap
\overline{\mathcal{G}^{g,k,\ell}_{n}}(m)$. In such terms there exists
at least one lambda among first $k$ \correct{lambdas} which binds
$\ell'$ leaves with $1 \leq \ell' \leq \ell$ (we can \correct{choose} them out of $m$ ones) and
the other leaves (their number is equal to $m-\ell' \leq m - 1$) can be bound
by $k-1$ lambdas. Thus, we obtain the upper bound equal to
$\sum_{1 \leq \ell'\leq  \ell} k {m
  \choose  \ell'} (k-1)^{m- \ell'} \leq k m^ \ell (k-1)^{m-1}$.
This holds because $\sum_{1 \leq  \ell'\leq  \ell} {m \choose  \ell'} \leq m^ \ell$
which can be proved by induction over $ \ell$ when $m \geq 2$ (here $m
\geq k \geq 3$).

Therefore, the quotient of the two cardinalities is less than
$$\frac{k m^{\ell} (k-1)^{m-1}}{k^{m-k}} = k^k m^{\ell}
\left( \frac{k-1}{k} \right) ^{m-1} \hspace{-1.5em}  \text{ for all } m \geq h(n).$$
As $n$ tends to infinity, the above quotient tends to $0$ uniformly in
$m$. To establish this, we define $f(x) = x^\ell R^{\frac{x}{2}}$ with $R
  = \frac{k-1}{k} < 1$. Thus we have
$$ k^k m^{\ell}
\left( \frac{k-1}{k} \right) ^{m-1} \leq k^k f(m) R  ^{\frac{m}{2}-1}$$
Then, $f'(x) = x^{ \ell-1}  R^{\frac{x}{2}} ( l + x \frac{\ln(R)}{2} )$ and
  we see that $f(x)$ reaches a maximum on ${\mathbb R}_+$ for $x = A =
  -\frac{2 \ell}{\ln(R)}$ (which is a positive constant because $R =  \frac{k-1}{k} < 1$),
which gives:
$$ k^k m^{\ell}
\left( \frac{k-1}{k} \right) ^{m-1} \leq k^k f(A) R^{\frac{m}{2}-1}
\leq k^k f(A) R^{\frac{h(n)}{2}-1} $$
For $t \in \Lambda_n$ and $m \geq h(n)$ \correc{the} sets $[t] \cap
\overline{\mathcal{G}^{g,k,\ell}_{n}}(m)$ form a partition of
$\overline{\mathcal{G}^{g,k,\ell}_{n}}$. Now Lemma \ref{convergence} finishes the proof.
\qed

As a simple corollary of the above theorem, we obtain the following result:

\begin{nota}
Let $k$ and $\ell$ be positive integers. Let $g \colon {\mathbb N} \to {\mathbb N}$ be a
 function such that $g \in o\big( \sqrt{n/\ln (n)} \big)$, $\lim_{n\rightarrow\infty}g(n)=\infty$, and $g(n) \geq 3$ for all $n$.
 By ${\mathcal H}^{g, k,\ell}$ we denote the class of terms such that $t \in {\mathcal H}^{g, k,\ell}$
 iff
\begin{enumerate}[1.]
\item $t \in {\mathcal G}^{g, k,\ell}$,
\item there are no two consecutive non-binding lambdas in $t$.
\end{enumerate}
Additionally, we denote by $\overline{\mathcal{H}^{g, k,\ell}} = {\mathcal G}^{g,k,l} \setminus {\mathcal H}^{g,k,\ell}$ the complement of the class ${\mathcal H}^{g,k,l}$ in ${\mathcal G}^{g,k,\ell}$ and by $\overline{\mathcal{H}^{g,k,\ell}_n}$ the set of terms from $\overline{\mathcal{H}^{g,k,\ell}}$ of size $n$.
\end{nota}

\begin{lem}\label{lem:twononbind}
Let $k$ and $\ell$ be positive integers. Let $g \colon {\mathbb N}
\to {\mathbb N}$ be a function such that $g \in o\big( \sqrt{n/\ln
(n)} \big)$, $\lim_{n\rightarrow\infty}g(n)=\infty$, and $g(n) \geq 3$ for all $n$. The density of
${\mathcal H}^{g, k,\ell}$ in $\Lambda$ is $1$.
\end{lem}

\proof
We define a family of injective and size-preserving functions $\varphi_{n}^{\mathcal{H}}$ from $\overline{\mathcal{H}^{g, k,\ell}_n}$ into the set of terms whose leading $\lambda$ binds only one \corr{variable occurrence}.

Let $t$ be a term from ${\overline{\mathcal H}_n^{g, k,\ell}}$. Let
$t_1$ be a subterm rooted at a highest \corr{leftmost} occurrence of two non-binding lambdas, $t_1=\lambda x.\lambda y.u$. We replace this subterm by the application $(z \ u)$, where $z$ is a fresh variable. We obtain the term $t'$ of size $n-1$ and, finally, we define $\varphi_n^{\mathcal H}(t) = \lambda z.t'$. The result follows from Theorem~\ref{binding3}.
\qed

\subsection{A random term avoids any fixed closed term}


\begin{nota}
Let $j$ be a positive integer and $k(j) = \sum_{i\leq j}L_i$ (let
us recall that $L_i$ denotes the number of closed terms of size
$i$). Let $g \colon {\mathbb N} \to {\mathbb N}$ be a function
such that $g \in o\big( \sqrt{n/\ln (n)} \big)$, $g(n) \geq 3$ for
all $n$ and $\lim_{n \to \infty}g(n)=\infty$. By ${\mathcal
I}^{g,j}$ we denote the class of closed terms such that $t \in
{\mathcal I}^{g,j}$ iff
\begin{enumerate}[1.]
\item $t \in {\mathcal H}^{g,k(j),k(j)}$,
\item $t$ does not contain any term from $\bigcup_{i \leq j} \Lambda_{\corr{i}}$ as a subterm.
\end{enumerate}
Additionally, we denote by $\overline{\mathcal{I}^{g,j}} = {\mathcal H}^{g,k(j),k(j)} \setminus {\mathcal I}^{g,j}$ the complement of the class ${\mathcal I}^{g,j}$ in ${\mathcal H}^{g,k(j),k(j)}$ and by $\overline{\mathcal{I}^{g,j}_n}$ the set of terms from $\overline{\mathcal{I}^{g,j}}$ of size $n$.
\end{nota}

\begin{thm}\label{avoidaux}
Let $j$ be a positive integer and let $g \colon {\mathbb N} \to
{\mathbb N}$ be a function such that $g \in o\big( \sqrt{n/\ln
(n)} \big)$, $g(n) \geq 3$ for all $n$ and $\lim_{n \to
\infty}g(n) = \infty$. The density of ${\mathcal I}^{g,j}$ in
$\Lambda$ is $1$.
\end{thm}

\proof
Let us fix a positive integer $j$ and a function $g$ as in the assumptions of the theorem. We construct a family of codings $\varphi_n^{\mathcal{I}} \colon \overline{\mathcal{I}^{g,j}_n} \to \Lambda_n$ such that their images are negligible in $\Lambda _n$.

There are $k(j)=\sum_{i \leq j} L_i$ elements in $\bigcup_{i \leq j}
\Lambda_i$. Thus, there is a bijection $f$ from $\bigcup_{i \leq j}
\Lambda_i$ to $\{ 1, \ldots, k(j)\}$.

Let $n$ be an integer satisfying $g(n) > k(j)$ and $n > k(j)+j$. Let $t \in \overline{\mathcal{I}^{g,j}_n}$. By hypothesis the term $t$ belongs to ${\mathcal B}^{g+1}$, so it has more than $k(j)$ head lambdas since $k(j) < g(n)$ (see Figure~\ref{fig:avoidterm}).

\begin{figure}[ht]
\centering
\begin{minipage}{.4\linewidth}
  \begin{tikzpicture}[scale=.8]
      \lmbda{(0,4)}{(0,3.5)}{$\lambda x_1$}{east}
      \lmbda{(0,3.5)}{(0,3)}{$\lambda x_2$}{east}
      \draw[dotted, very thick] (0,3) -- (0,2);
      \lmbda{(0,2)}{(0,1.5)}{$\lambda x_{k(j)}$}{east}
      \subterm{(0,1)}{.5}{.5}{}
      \subtermspec{(0,0)}{.5}{.5}{$u$}{(0,-.2)}
  \end{tikzpicture}
\end{minipage}
\begin{minipage}{.4\linewidth}
  \begin{tikzpicture}[scale=.8]
      \lmbda{(0,4)}{(0,3.5)}{$\lambda x_1$}{east}
      \lmbda{(0,3.5)}{(0,3)}{$\lambda x_2$}{east}
      \draw[dotted,very thick] (0,3) -- (0,2);
      \lmbda{(0,2)}{(0,1.5)}{$\lambda x_{m-1}$}{east}
      \lmbda{(0,1.5)}{(0,1)}{$\lambda x$}{east}
      \lmbda{(0,1)}{(0,.5)}{$\lambda x_{m}$}{east}
      \draw[dotted,very thick] (0,.5) -- (0,-.5);
      \lmbda{(0,-.5)}{(0,-1)}{$\lambda x_{k(j)}$}{east}
      \subterm{(0,-1.5)}{.5}{.5}{}
      \subterm{(-0,-2.5)}{.5}{.5}{$v$}
  \end{tikzpicture}
\end{minipage}
\caption{\rm Terms $t \in \overline{\mathcal{I}^{g,j}_n}$ and $\varphi_n^{\mathcal{I}}(t)$ from Theorem \ref{avoidaux}}
\label{fig:avoidterm}
\end{figure}
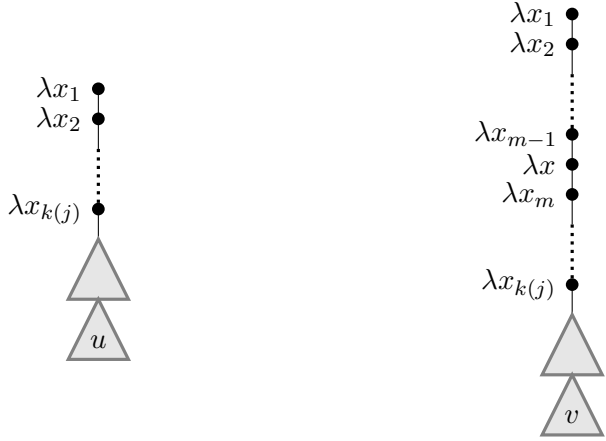

In term $t$, consider the smallest $m$ such that $f(u) = m$ for some closed $u$ occurring in $t$
(there is at least one such $m$ because $t \in \overline{\mathcal{I}^{g,j}_n}$.
Let us consider the term $s$ which is obtained from the term $t$ by adding an additional unary node (labeled with $\lambda x$) at depth $m$.  Let us define $\varphi_n^{\mathcal{I}}(t)$ obtained by replacing the leftmost deepest \correc{occurrence of} subterm $u$ in $s$ by the term $v=(x\ (x\ (\ldots (x\ x)\ldots)))$ of size $\correc{i}-1$ \correc{where $i$ is the size of $u$} (see Figure~\ref{fig:avoidterm}). Thus, the size of $\varphi_n^{\mathcal{I}}(t)$ is equal to $n$.

By Theorem \ref{binding3}, each of the first $k(j)$ head lambdas in \corr{a} term
from ${\mathcal H}^{g,k(j),k(j)}$ of size $n$ binds more than $k(j)$
variables. \correc{Therefore, among the first $k(j)$ head lambdas of
  $\varphi_n^{\mathcal{I}}(t)$, only the $m$-th $\lambda$ binds less than $k(j)$ variables
  (recall that $u$ is closed which means that the number of
  variables bound by $\lambda x_i$ for $1 \leq i \leq k(j)$ is the same in $t$ and $\varphi_n^{\mathcal{I}}(t)$). Since $f(u)=m$ and $f$ is injective, the function $\varphi_n^{\mathcal{I}}$ is injective.} Terms from the image $\varphi_n^{\mathcal{I}}(\overline{\mathcal{I}^{g,j}_n})$ \correc{are not in ${\mathcal H}^{g,k(j),k(j)}$} since the $m$-th $\lambda$ binds only $\correc{i\leq}j\leq k(j)$ variables. Thus, those terms are negligible among all terms of size $n$.
\qed



\subsection{The $\lambda$-width of a term}

Let us recall that $\lambda$-width of a term is the maximum number of incomparable binding lambdas in the term. In the following proposition we show that $\lambda$-width of typical $\lambda$-terms is small.

\begin{nota}
Let $g \colon {\mathbb N} \to {\mathbb N}$ be a function such
 that $g \in o\big( \sqrt{n/\ln (n)} \big)$, $g(n) \geq 3$ for all $n$ and $\lim_{n \to \infty}g(n)=\infty$.
  By ${\mathcal J}^{g}$ we denote the class of closed terms such that $t \in {\mathcal J}^{g}$
   iff
\begin{enumerate}[1.]
\item $t \in {\mathcal G}^{g,1,4}$
\item $\lambda$-width of $t$ is at most 
  $2$.
\end{enumerate}
Additionally, we denote by $\overline{\mathcal{J}^{g}} = {\mathcal G}^{g,1,4} \setminus {\mathcal J}^{g}$ the complement of the class ${\mathcal J}^{g}$ in ${\mathcal G}^{g,1,4}$ and by $\overline{\mathcal{J}^{g}_n}$ the set of terms from $\overline{\mathcal{J}^{g}}$ of size $n$.
\end{nota}

\begin{thm}\label{th:widthtwo}
Let $g \colon {\mathbb N}
\to {\mathbb N}$ be a function such that $g \in o\big( \sqrt{n/\ln
(n)} \big)$, $g(n) \geq 3$ for all $n$ and $\lim_{n \to
\infty}g(n)=\infty$. The density of ${\mathcal J}^{g}$ in
$\Lambda$ is $1$.
\end{thm}

\proof
Let us fix a function $g$ as in the assumptions
of the theorem. We construct a family of codings
$\varphi_n^{\mathcal{J}} \colon \overline{\mathcal{J}^{g}_n} \to
\Lambda_n$ such that their images are negligible in $\Lambda _n$. Let
$t$ be an element of $\overline{\mathcal{J}^{g}_n}$, therefore the
$\lambda$-width of $t$ is at least $3$. Let us denote by $\lambda x$,
$\lambda y$ and $\lambda z$ the first three highest \corr{leftmost} pairwise incomparable binding lambdas (appearing in this order from left to right in $t$).

    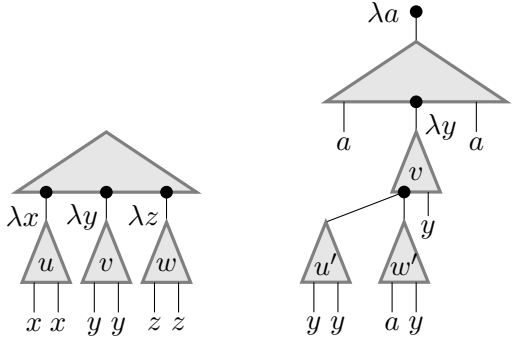
\begin{figure}[ht]
    \centering
    \begin{tikzpicture}[scale=.8]
      \subterm{(0,3)}{1.5}{.5}{}
      \lmbda{(-1,2.5)}{(-1,2)}{$\lambda x$}{north east}
      \subtermspec{(-1,1.5)}{.4}{.5}{$u$}{(0,-.2)}
      \vari{(-1.2,1)}{(-1.2,.5)}{$x$}
      \vari{(-.8,1)}{(-.8,.5)}{$x$}
      \lmbda{(0,2.5)}{(0,2)}{$\lambda y$}{north east}
      \subtermspec{(0,1.5)}{.4}{.5}{$v$}{(0,-.2)}
      \vari{(-.2,1)}{(-.2,.5)}{$y$}
      \vari{(.2,1)}{(.2,.5)}{$y$}
      \lmbda{(1,2.5)}{(1,2)}{$\lambda z$}{north east}
      \subtermspec{(1,1.5)}{.4}{.5}{$w$}{(0,-.2)}
      \vari{(.8,1)}{(.8,.5)}{$z$}
      \vari{(1.2,1)}{(1.2,.5)}{$z$}
    \end{tikzpicture}\quad\quad\quad
    \begin{tikzpicture}[scale=.8]
      \lmbda{(0,4)}{(0,3.5)}{$\lambda a$}{east}
      \subterm{(0,3)}{1.5}{.5}{}
      \vari{(-1.2,2.5)}{(-1.2,2)}{$a$}
      \vari{(1,2.5)}{(1,2)}{$a$}
      \begin{scope}[xshift=-1cm]
        \lmbda{(1,2.5)}{(1,2)}{$\lambda y$}{north west}
        \subtermspec{(1,1.5)}{.4}{.5}{$v$}{(0,-.2)}
        \vari{(1.2,1)}{(1.2,.6)}{$y$}
\app{(.8,1)}{(-.5,.5)}{(.8,.5)}
        \subtermspec{(-.5,0)}{.4}{.5}{$u'$}{(0,-.2)}
        \vari{(-.7,-.5)}{(-.7,-1)}{$y$}
        \vari{(-.3,-.5)}{(-.3,-1)}{$y$}
        \subtermspec{(.8,0)}{.4}{.5}{$w'$}{(0,-.2)}
        \vari{(.6,-.5)}{(.6,-1)}{$a$} \vari{(1,-.5)}{(1,-1)}{$y$}
      \end{scope}
    \end{tikzpicture}
    \caption{\rm The terms $t$ and $\varphi_n^{\mathcal{J}}(t)$ from Theorem \ref{th:widthtwo}}
    \label{fig:lambdawidth}
\end{figure}

Let $\lambda x. u$, $\lambda y. v$ and $\lambda z. w$ be subterms rooted at those lambdas (see Figure~\ref{fig:lambdawidth}). Let $u'=u[x:=y]$, let $a$ be a new variable, \corr{and} let $w'$ be the term obtained from $w$ by replacing the leftmost occurrence of $z$ with $a$ and the others (possibly none) with $y$. Let $\varphi_n^{\mathcal{J}}(t)$ be the term obtained from $t$ by adding $\lambda a$ at the root, substituting both subterms $\lambda x.u$ and $\lambda z.w$ with $a$ and replacing the leftmost occurrence of $y$ in $v$ with term $(u' \ w')$. We have $\size(\varphi_n^{\mathcal{J}}(t))= \size(t)$. Also note that since we \correct{choose} the highest three incomparable binding lambdas no variable becomes free in the constructed term.


We can reconstruct the term $t$ from $\varphi_n^{\mathcal J}(t)$ by indicating places for $\lambda y$ and the subterm $(u' \ w')$:

\begin{enumerate}[$\bullet$]
\item Let $\nu_l$ (resp.\ $\nu_r$) be the deepest node above the two leftmost (resp.\ rightmost) occurrences of $a$. Remark that since there \corr{are} exactly $3$ occurrences of $a$, one of these two nodes is above the other. Let $\nu$ be the deepest one. $\lambda y$ is the first binding $\lambda$ on the path from the    node $v$ to the middle occurrence of $a$;
\item then, the application node $(u' \ w')$ is the deepest node above the middle occurrence of $a$ and all the occurrences of $y$ on the left of this middle occurrence of $a$.\smallskip
\end{enumerate}

\noindent Since the image of $\varphi_n^{\mathcal J}$ contains only terms starting with a $\lambda$ which binds only $3$ occurrences of the corresponding variable, by Theorem \ref{binding3}, the density of $\varphi_n^{\mathcal J}(\overline{\mathcal{J}^{g}_n})$ is equal to zero. The injectivity of $\varphi_n^{\mathcal J}$ finishes the proof.
\qed

\subsection{The density of strongly normalizable terms}

From Theorem~\ref{th:widthtwo} (using $g(n)=\ln(n)^2+3$ for instance) we know that almost all terms are of width at most $2$. In Section \ref{lambda} we introduced the notion of 'safe'
terms of width $2$ which implies strong normalization (Proposition~\ref{width2sn}).

Now we prove that the set of unsafe terms of width $2$ has density $0$.

\begin{nota}
Let $g \colon {\mathbb N}
\to {\mathbb N}$ be a function such that $g \in o\big( \sqrt{n/\ln
(n)} \big)$, $g(n) \geq 3$ for all $n$ and $\lim_{n \to
\infty}g(n)=\infty$. By ${\mathcal K}^{g}$ we denote the class
of closed terms such that $t \in {\mathcal K}^{g}$ iff
\begin{enumerate}[1.]
\item $t \in {\mathcal J}^{g}$,
\item $t$ is safe.
\end{enumerate}
Additionally, we denote by $\overline{\mathcal{K}^{g}} = {\mathcal J}^{g} \setminus {\mathcal K}^{g}$ the complement of the class ${\mathcal K}^{g}$ in ${\mathcal J}^{g}$ and by $\overline{\mathcal{K}^{g}_n}$ the set of terms from $\overline{\mathcal{K}^{g}}$ of size $n$. Note that terms from $\overline{\mathcal{K}^{g}}$ are of $\lambda$-width at most $2$ and are unsafe, therefore they are of width exactly $2$ (because terms of width $1$ are safe by definition).
\end{nota}

\begin{prop}\label{prop:unsafedensity}
Let $g \colon {\mathbb N} \to {\mathbb N}$ be the function defined by $g(n)=\ln(n)^2+3$.
  The density of ${\mathcal K}^{g}$ in $\Lambda$ is $1$.
\end{prop}

\proof
The root of the minimal subterm of width $2$ of a term is called the branching node and is always binary.
We show that the density of $\overline{\mathcal{K}^{g}}$ in $\Lambda$ is $0$. Let us divide the set $\overline{\mathcal{K}^{g}_n}$ into two disjoint subsets:

\bigskip

\noindent $\overline{\mathcal{K}^{g,1}_{n}}$: the set of terms of size $n$ such that \correc{neither of} the lengths of paths from the branching node to the two highest incomparable binding lambdas \correc{is} greater than $\ln(n)$,

\bigskip

\noindent $\overline{\mathcal{K}^{g,2}_{n}}$: the set of remaining terms.

\bigskip

We can construct a family of codings from the set $\overline{\mathcal{K}^{g,1}_{n}}$ into $\Lambda_n$ in the following two steps:

\correct{
\begin{enumerate}[(1)]
\item Remove \corr{the} two highest pairwise incomparable binding lambdas and put one lambda, binding their variables, at the root of the whole term. The size of the obtained term is smaller by $1$ and the branching node is uniquely determined.
 \item Insert one non-binding lambda among the head lambdas of the term. By choice of $g$ and by definition of ${\mathcal{K}^g}$, terms from $\overline{\mathcal{K}^{g,1}_{n}}$ have more than $\ln(n)^2$ head lambdas. Therefore we can encode the lengths of the paths from the branching node to the two highest binding lambdas as the position of this new lambda. By Theorem \ref{binding2} the image of such \corr{a} transformation has density $0$.\smallskip
\end{enumerate}
}

\noindent For the set $\overline{\mathcal{K}^{g,2}_{n}}$ \corr{we do not construct an
  injection, but a relation that associates to terms in
  $\overline{\mathcal{K}^{g,2}_{n}}$ disjoint set of terms of
  cardinals greater than  $\ln(n)/2$.} This is enough to show that $\overline{\mathcal{K}^{g,2}_{n}}$ has density $0$. Precisely, we proceed as follows:
 \correct{
 \begin{enumerate}[(1)]
 \item Choose the \corr{leftmost} path among the one or two paths longer than
   $\ln(n)$  (without loss of generality we
   can assume it is the left path, the case of the right one is
   analogous).
 \corr{Consider the binding lambda at the end of this path and let}
 $t_0$ be the subterm rooted at this lambda.
Let $t_1,\dots, t_k$ be the right subtrees rooted at the
 binary nodes on the path between the branching node and $t_0$
(the path goes always to the left since the term is unsafe).
 By Lemma~\ref{lem:twononbind} at
 least half of the nodes on this path are binary (since there are no
 two consecutive non-binding lambdas in the tree). This means that $k
 \geq \ln(n)/2$.
  Moreover,
 the terms $t_1,\ldots, t_k$ contain no binding lambda otherwise, the lambda
 width of the term would be greater than $2$.

\item  Choose some leaf $x$ belonging to some subtree \corr{among}
  $t_1,\dots, t_k$ and exchange it with the subterm
  $t_0$. Independently of \corr{the} choice of the leaf, the encoding
  can be reversed since:
  \begin{enumerate}\item the position of $t_0$ in the encoded term
  is uniquely identifiable
    as the highest binding lambda of the innocuous subtree below the
    branching node (\corr{the innocuousness identifies the modified
      branch});
    \item the position of the variable $x$ in the encoded term is
      identifiable as the leftmost leaf of the subtree rooted at the
      branching node of the resulting term which is still of width 2
      (in the case of the right branch, it is the leftmost leaf of the
      right sub-term of the branching node).
  \end{enumerate}
The encoding preserves size and the number of
  possibilities for the choice of a leaf $x$ is
  the number of leafs of $t_1,\dots,t_k$, which is greater than
 $\ln(n)/2$. Therefore, terms from
  $\overline{\mathcal{K}^{g,2}_{n}}$ are negligible in $\Lambda_n$
  as $n$ tends to infinity.\qed

\end{enumerate}
}

\begin{main_theorem}\label{main}
  The set of strongly normalizable terms has density $1$.
\end{main_theorem}

\proof
  Proposition~\ref{prop:unsafedensity} shows the existence of a set of safe terms that has density $1$.
  Proposition~\ref{width2sn} shows that they are all strongly normalizable.
\qed
\section{Combinatory logic}\label{cl}

In this section we show that our main result about strong normalization of random $\lambda$-terms does not \corr{hold} in the world of random combinatory terms. On the contrary, a random combinatory term is not strongly normalizing. The main technique used in this section is the theory of generating functions.

As stated in Section \ref{lambda} we can look at combinatory terms as at rooted binary trees whose leaves are labeled with combinators $K$, $S$ and $I$.
We denote by $\mathcal{F}_n$ the number of such trees with $n$ inner nodes
(see Section \ref{Combinatory Logic}).
Obviously the set $\mathcal{F}_n$ is finite. We denote its cardinality by $F_n$. It is trivial to notice that $F_n = C(n) 3^{n+1}$  where $C(n)$ is the $n$-th Catalan number (see Proposition \ref{catalan}).

\begin{prop}\label{prop_gf_e}\hfill
\begin{enumerate}[\em(1)]
\item The generating function $f$ enumerating \correc{cardinality of} the set of combinators (sequence $F_n$) is given by
$$f (z)= \frac{1-\sqrt{1-12z}}{2z}.$$
\item Let $t_0 \in \mathcal{F}_{n_0}$ be a combinator of size $n_0 \geq 1$ . The generating function $f_{t_0}$ enumerating \correct{cardinality of} the set of all combinators having $t_0$ as a subterm is given by
$$f_{t_0} (z) = \frac{-\sqrt{1 - 12z} + \sqrt {1 - 12z + 4z^{n_0 + 1}} }{2z}.$$
\end{enumerate}
\end{prop}

\proof\hfill
\begin{enumerate}[(1)]
\item $F_n$ denotes the number of combinators of size $n$. Since there are three combinators of size $0$, we have $F_0=3$. Combinators of size $n \geq 1$ are built of two combinators of sizes $i$ and $n-i-1$ ($i=0,\ldots,n-1$), \correct{respectively,} thus $F_n=\sum_{i=0}^{n-1}F_i F_{n-i-1}$. From this recurrence relation we obtain that the generating function $f$ for the sequence $(F_n)$ satisfies the equation
$$f (z)= 3 + z (f (z))^2.$$
Solving this equation in $f(z)$ we get two solutions:
$$\frac{1-\sqrt{1-12z}}{2z} \quad \text{and} \quad \frac{1+\sqrt{1-12z}}{2z}.$$
We have $F_0=3$, so $\lim_{z \to 0}f(z)=3$. Thus, the desired generating function is given by the first solution.

\item  Let $t$ be a combinator having $t_0$ as a subterm. Then either $t$ is equal to $t_0$ or \correct{$t$} is of the form of application  $t= t_1 \; t_2$ in which case either $t_0$ is a subterm of $t_1$ but not of $t_2$ or $t_0$ is a subterm of $t_2$ but not of $t_1$ or, finally, $t_0$ is a subterm of both $t_1$ and $t_2$. We get the following equation:
$$ f_{t_0}(z) = z^{n_0} + 2z f_{t_0} (z) \left( f(z) - f_{t_0}(z) \right) + z(f_{t_0}(z))^2 ,$$
which  can be simplified to
$$ f_{t_0}(z)  = z^{n_0} + 2z f_{t_0}(z) f(z) - z(f_{t_0}(z))^2 .$$
Solving this equation in $f_{t_0}$ gives us two possible solutions:
$$\frac{-\sqrt{1 - 12z} + \sqrt {1 - 12z + 4z^{n_0 + 1}} }{2z} \quad \text{and} \quad \frac{-\sqrt{1 - 12z} - \sqrt {1 - 12z + 4z^{n_0 + 1}} }{2z} .$$
Since $n_0 \geq 1$, there is no term of size $0$ having $t_0$ as a subterm. Thus, $\lim_{z \to 0}f_{t_0}(z) = 0$. The first function satisfies this condition, so this is the wanted generating function. \qed\smallskip
\end{enumerate}

\noindent The following theorem shows that the result similar to Theorem \ref{avoidaux} is not valid in combinatory logic.

\begin{thm}\label{main-cl}
Let $t_0$ be a combinator. The density of combinators having $t_0$ as a subterm is $1$.
\end{thm}

\proof
We prove this result applying Theorem \ref{glowniejsze}. We start \correc{by} normalizing \correc{the} functions $f_{t_0}$ and $f$ in such a way that the \correc{closest singularity to the origin is located at $z=1$}. Hence, we define functions $\overline{f_{t_0}} (z) := z f_{t_0} (z/12)$ and $\overline{f} (z) := z f (z/12)$. We get
$$\overline{f_{t_0}}(z) = - \frac{\sqrt {1-z}}{2} + \frac{\sqrt {1-z + 4\, \left( {\frac {z}{12}} \right) ^{n_0 + 1}}}{2}, \qquad
\overline{f} (z) = \frac {1}{2} - \frac {1}{2}\,\sqrt {1 - z}.$$
Since $\frac{\sqrt {1-z + 4\, \left( {\frac {z}{12}} \right) ^{n_0 + 1}}}{2}$ is analytic for $|z|\leq1$, the representation above reveals that the only singularity of $\overline{f_{t_0}} (z)$ and $\overline{f} (z)$ located in $|z|\leq1$ is indeed at $z=1$ and both functions $\overline{f_{t_0}}$ and $\overline{f_{t_0}}$ have expansions in the vicinity of $z=1$ of forms $\sum_{p \geq 0} v_p(1-z)^{p/2}$ and $\sum_{p \geq 0} w_p(1-z)^{p/2}$, respectively, with $w_1 = -1/2 \neq 0$. We have to remember that the multiplication by $z$ and the change of the radius of convergence for functions $f_{t_0}$ and $f$ affect sequences represented by the new functions. Therefore, $\overline{f_{t_0}}$ and $\overline{f}$ enumerate sequences $(12)^{1-n} \left( [z^{n-1}] \{f_{t_0} (z) \}\right)$ and $(12)^{1-n} \left([z^{n-1}]\{{f}(z)\}\right)$, respectively.

Now, let us consider functions $\widetilde{f}$ and $\widetilde{f_{t_0}}$ satisfying the following equations:
 $\widetilde{f}(\sqrt {1 - z} )= \overline{f} (z) $ and $\widetilde{f_{t_0}} (\sqrt {1 - z}) = \overline{f_{t_0}} (z) $. They are defined in the following way:
$$ \widetilde{f_{t_0}}(z) = -\frac{z}{2}  +  \frac{\sqrt {{z}^{2}+4\, \left( {\frac{1-{z}^{2}}{12}} \right) ^{n_0 + 1}}}{2}, \qquad
 \widetilde{f} (z) = \frac{1}{2} -  \frac{1}{2} z.$$
By analyticity of functions $(\widetilde{f_{t_0}})'$ and $(\widetilde{f})'$ for $|z|<1$, their derivatives in this circle exist and are as follows:
$$(\widetilde{f_{t_0}})' (z) = -\frac{1}{2} + {\frac {\left( 2\,z-\frac{8}{12} (n_0 + 1) z\, \left( {\frac {1-{z}^{2}}{12}} \right) ^{n_0} \right)}{ 4 \sqrt {{z}^{2 }+4\, \left({\frac {1-{z}^{2}}{12}} \right) ^{n_0 + 1}}}}, \qquad (\widetilde{f})' (z) = -\frac {1}{2}.$$
Finally, by computing the values of those derivatives at $z=0$ we get $(\widetilde{f_{t_0}})' (0) = -\frac{1}{2}$ and $(\widetilde{f})' (0) = -\frac{1}{2}$.

To complete the proof we apply Theorem \ref{glowniejsze}, obtaining:
\[\liminfty{\frac{[z^n]\{f_{t_0} (z)\}}{[z^n]\{f(z)\}}}  = \liminfty{\frac{ (12)^{1-n} [z^{n-1}] \{\correct{\overline{f_{t_0}}} (z)\}}{(12)^{1-n} [z^{n-1}]\{ \correct{\overline{f}}(z)\}}}  = \frac{(\widetilde{ f_{t_0}  })'(0)}{(\widetilde{f})'(0)} = 1 .\eqno{\qEd}\]

\begin{main_theorem}\label{main_CL}
The density of non-strongly normalizing combinators is $1$.
\end{main_theorem}

\proof
Let $\Omega = S \ I \ I \ (S \ I \ I)$. The combinator $\Omega$ reduces to itself and thus is not strongly normalizing. The thesis follows directly from Theorem \ref{main-cl}, since the density of combinators containing $\Omega$ as a subterm is $1$.
\qed 

\section{Discussion}\label{size}

The difference between Theorem \ref{main} in the
$\lambda$-calculus and Theorem \ref{main_CL} in combinatory logic
may be surprising since there are translations between these
systems which respect many properties (including strong normalization). However, these translations do not preserve the size.

The usual translation, which we denote by $T\!_1$, from
combinatory logic to $\lambda$-calculus, is linear: there is a
constant $k$ such that, for all term $t$, $\size(T\!_1 (t)) \leq k \size(t)$. Note that this translation is far from being surjective:
its image has density 0. The usual translation $ T\!_2 $ in the
other direction (see \cite{BAR84}) is not linear but exponential. As far as we
know, $\size(T\!_2 (t))$ is of order
$3^{\size(t)}$.
The point is that $T\!_2$ has to code the variable binding in some way
and this requires the use of many combinators.

\subsection{Future work and open questions}\label{future}
 We \correct{present} here some questions 
\corr{left open}.

\begin{enumerate}[(1)]
\item Give the asymptotics of $L_n$ or, at least, better
upper and lower
  bounds.

\item Give the density of typable terms. Numerical
  experiments done by Jue Wang (see \cite{wang}) seem to show that this
  density is 0 \corr{for simple types}.

\item Compute the densities  of strongly normalizing terms
with other notions of size, mainly by changing the size of variables,
and eventually making it non constant.\corr{For what notions of size do we
get a density $1$ as in Theorem \ref{main} or a density $1$ as in
Theorem \ref{main_CL}? Are they sizes for which the density is
neither $0$ nor $1$?}
\end{enumerate}

\subsection{Possible applications}

It is now popular to test programs written in functional languages
using randomly generated 
inputs \cite{qcheck}.
For higher-order functional programs where inputs are functions,
this also means the ability to generate typical functions under certain known distributions.

For many typed languages such as OCaml or Haskell, functional programs \corr{can be tested by} supplying random typed $\lambda$-terms generated in compliance with their natural distribution (probably different for different \correct{types} of programs).

For untyped languages such as LISP, \correc{the} problem of testing programs is very close
to \correc{the} capability of generating   pure random $\lambda$-terms.  In our case,
those terms automatically enjoy important properties such as strong
normalization, if they do not use recursive definition\correc{s}.
However, it would be nice to have a distribution where terms with other computationally good properties have density~$1$.

\bigskip

One could argue that width at most $2$ is a negative result \correc{since it \correct{shows} that random terms do not contain any tuple of more than $2$ functions, whereas 'natural' programs do contain such kind of subterms.}


\bigskip

\corr{Anyway,} results \corr{and methods} presented in this paper \corr{can be used as} the starting point for \correc{further} research based on other notion\correc{s} of size \corr{which are meaningful} for applications.


\section*{Acknowledgments}
We would like to thank the anonymous referees for their numerous, precise and
fruitful remarks.


\begin{thebibliography}{}

\bibitem{BAR84} H. Barendregt, \emph{The Lambda Calculus: Its Syntax and Semantics}.
  Studies in Logic and The Foundations of Mathematics, vol. 103, North-Holland, 1984.

\bibitem{theyssier} L. Boyer, G. Theyssier,
\emph{On Local Symmetries and Universality in Cellular Automata}. 26th International Symposium on Theoretical Aspects of Computer Science (STACS), Dagstuhl Seminar Proceedings, 2009,\\\texttt{http://stacs2009.informatik.uni-freiburg.de/proceedings.php}

\bibitem{qcheck} K. Claessen, J. Hughes, \emph{QuickCheck: A Lightweight Tool for Random Testing of Haskell Programs}.
Proc. of International Conference on Functional Programming (ICFP), ACM SIGPLAN, pp. 268-279, 2000.





\bibitem{curry_feys} H.B. Curry, R. Feys, \emph{Combinatory Logic}. Vol. I. Amsterdam: North Holland, 1958.

\bibitem{david01} R. David, \emph{Normalization without reducibility}. APAL 107 (2001), pp. 121-130.

\bibitem{davidWeb} R. David, \emph{A short proof of the strong normalization of the simply typed lambda calculus}.\\\texttt{http://www.lama.univ-savoie.fr/\~{}david/}

\bibitem{fs01} P. Flajolet, R. Sedgewick, \emph{Analytic combinatorics}. Cambridge University Press, 2008.

\bibitem{FGGZ} H. Fournier, D. Gardy, A. Genitrini, M. Zaionc, \emph{Classical and intuitionistic logic are asymptotically identical}. Computer Science Logic 2007, Lecture Notes in Computer Science 4646, pp. 177-193.



\bibitem{GK-09} A. Genitrini, J. Kozik, \emph{Quantitative comparison of Intuitionistic and Classical logics -- full propositional system}.
     LFCS09, Lecture Notes in Computer Science 5407, pp. 280-294, 2009.

\bibitem{GKZ} A. Genitrini, J. Kozik, M. Zaionc,
 \emph{Intuitionistic vs. Classical Tautologies, Quantitative Comparison}. Lecture Notes in Computer Science 4941, pp. 100-109, 2008.

\bibitem{hamkins} J.D. Hamkins and A. Miasnikov,
\emph{The halting problem is decidable on a set of asymptotic probability one}. Notre Dame J. Formal Logic 47(4), pp. 515-524, 2006.

\bibitem{kos-zaionc03} Z. Kostrzycka, M. Zaionc,
\emph{Statistics of intuitionistic versus classical logic}. Studia Logica, 76(3), pp. 307-328, 2004.





\bibitem{szego} G. Szeg\"{o}, \emph{Orthogonal polynomials}. American Mathematical Society Colloquium Series Publication, 1967.

\bibitem{mtz00} M. Moczurad, J. Tyszkiewicz, M. Zaionc, \emph{Statistical properties of simple types}. Mathematical Structures in Computer Science, 10(5), pp. 575-594, 2000.



\bibitem{regnier} L. Regnier, \emph{Une \'equivalence sur les
    lambda-termes}. Theoretical Computer Science, Volume 126(2), pp. 281-292, 1994.

\bibitem{rybalov} A. Rybalov,
\emph{On the strongly generic undecidability of the Halting Problem}. Theoretical Computer Science, Volume 377, Issues 1-3, pp. 268-270, 31 May 2007.

\bibitem{schonfinkel} M. Sch\"{o}nfinkel, \emph{\"{U}ber die Bausteine der mathematischen Logik}. Mathematische Annalen 92, pp. 305-316, 1924.

\bibitem{smullyan} R.M. Smullyan, \emph{To Mock a Mockingbird and Other Logic Puzzles: Including an Amazing Adventure in Combinatory Logic}. Knopf, 1985.

\bibitem{wang} J. Wang, \emph{Generating Random Lambda Calculus Terms}.\\ \texttt{http://cs-people.bu.edu/juewang/research.html}

\bibitem{Wilf} H.S. Wilf, \emph{Generatingfunctionology}. Second ed., Academic Press, Boston, 1994.

\bibitem{zaionc05} M. Zaionc, \emph{ On the asymptotic density of tautologies in logic of implication and negation}. Reports on Mathematical Logic, vol 39,
pp. 67-87, 2005.

\bibitem{zai06} M. Zaionc, \emph{Probability distribution for simple tautologies}. Theoretical Computer Science, 355(2), pp. 243-260, 2006.

\end{thebibliography}
\end{document}